\def\trimspace#1{\trim@spaces@in{#1}}
\numberwithin{equation}{section}
\renewcommand{\d}[1]{\ensuremath{\;\operatorname{d}\!{#1}}}
\title{A $\Lambda$-Fleming-Viot type model with intrinsically varying population size}
\author[1]{Julian Kern \orcidlink{0000-0002-8231-0736}}
\author[2]{Bastian Wiederhold \orcidlink{0009-0000-4985-9076}}
\affil[1]{Weierstraß Institute for Applied Analysis and Stochastics, Berlin, Germany}
\affil[2]{Department of Statistics, University of Oxford, Oxford, UK}
\begin{document}

\maketitle

\begin{abstract}
We propose an extension of the classical $\Lambda$-Fleming-Viot model to intrinsically varying population sizes. 
During events, instead of replacing a proportion of the population, a random mass dies and a, possibly different, random mass of new individuals is added. 
The model can also incorporate a drift term, representing infinitesimally small, but frequent events. 
We investigate elementary properties of the model, analyse its relation to the $\Lambda$-Fleming-Viot model and describe a duality relationship.
Through the lookdown framework, we provide a forward-in-time analysis of fixation and coming down from infinity.
Furthermore, we present a new duality argument allowing one to deduce well-posedness of the measure-valued process without the necessity of proving uniqueness of the associated lookdown martingale problem.
\end{abstract}

%\setcounter{tocdepth}{2}
%\tableofcontents

\section{Introduction}
\paragraph{Genetics and Ecology} 
Early mathematical models of evolution could often be classified as belonging either to genetics or population dynamics. 
The former focusses on the development of the genetic information carried by the population through biological mechanisms, most prominently mutation, selection, recombination and genetic drift. 
These effects already result in a high amount of stochasticity. 
In order to handle the models, compromises are made in other areas such as the interplay with the environment or other species. 
These, in turn, are the strongholds of population dynamics, which often neglect the genetic code of individuals.

It has become a major endeavour of population modelling to weaken this separation.
A common characteristic of models at the intersection of ecology and genetics is to incorporate a form of varying population size required for the implementation of ecological factors such as external catastrophes, selective sweeps, interspecies relationships or competition for a fluctuating abundance of resources. 
Conceptually, the population size can vary extrinsically or intrinsically.
In the first case, the population size enters as a parameter into the development of the population and is driven by a randomness independent of the remaining stochasticity. 
This lends itself to modelling external influences such as catastrophes or the loss of habitat, see e.g.~\cite{KK03,GMS22,lambdadormancy}.
Whereas the population size is assumed to be only slowly varying in \cite{KK03}, it is taken to undergo drastic changes (so-called \emph{bottlenecks}) in \cite{GMS22,lambdadormancy}, leading to vastly different genealogies. 

In contrast, models with intrinsically varying population size inextricably link the reproductive dynamics with the population size.
The recent works \parencite{EK19} and \parencite{EKLRT23} have emphasized the potential of the lookdown framework in analysing intrinsically varying models, see also \parencite{DK96,DK99,DK99b}.
The former provides a toolbox for population models and constructs a spatial $\Lambda$-Fleming-Viot model with varying population size, which has been a major motivation for this work. 
The latter analyses a locally regulated population with a two-step reproduction mechanism with a juvenile and adult stage, which allows for a wide range of population profile patterns to appear.  

\paragraph{A varying size \bm{$\Lambda$}-Fleming-Viot process}
With this paper, we wish to further contribute to the understanding of the implications of intrinsically varying population size by considering a toy model based on a variation of the classical $\Lambda$-Fleming-Viot paradigm.
More precisely, the model will evolve through a series of reproduction events during which a random proportion ${z}_d$ of the population is killed and a random mass ${z}_b$ of offspring with a type chosen uniformly from the population prior to the event is added.
To ensure persistence of the population, we will allow for an appropriate drift in the population size.
Although to our knowledge, this specific model has not yet been considered, it fits into the general framework from \cite{DK99}. A similar model in the spatial context, but under much more restrictive assumptions can be found in \cite[Section 4.2]{EK19}.
Our model should also be compared to the model in \cite{Sch03}, where individuals can be parental to large families, but the population size is reduced to a fixed quantity after reproduction.

The model is not a \emph{generalisation} of the $\Lambda$-Fleming-Viot model in a strict sense: although it covers $\Lambda$-Fleming-Viot processes for a range of impact measures $\Lambda$, not all are included.
Instead, it should be viewed as the natural variant for which neither the offspring distribution nor the death proportion is restricted.
This is complemented by the fact that we can approximate any classical $\Lambda$-Fleming-Viot process arbitrarily well.
Despite this relatedness with classical models, the interplay between varying population size and reproduction events will necessitate additional care in the analysis.
Even though duality is still available in a restricted sense, the dual coalescent is time inhomogeneous and not suitable for analysis.
Instead of studying fixation and coming down from infinity backwards in time, we develop a \emph{forward-in-time} approach to analyse the genealogy.

The major tool in this analysis is the lookdown construction from \cite{DK99} which allows one to represent the measure-valued model as a countable particle system, where each individual has a unique label or level in $\mathbb{N}$.
The main advantage of lookdown constructions is the fact that offspring always ``look down" to their parent: from all individuals participating in a reproduction event, the one with the lowest level is parental.
This incorporates the corresponding coalescent backwards in time in a consistent way and allows one to trace lines of descent forward in time, independently of their changing type.

\paragraph{Outline} 
\Cref{sec:model} contains the definition of the model, presents the main results and closes with a discussion of the model choice.
In \Cref{sec:populationsize} we will start our analysis by collecting properties of the population size process. 
\Cref{sec:closedness} shows the closedness of the class of processes under a natural convergence condition. 
In \Cref{sec:relationlambdafvprocess} we will see that any $\Lambda$-Fleming-Viot process can be recovered as a limit of our processes.
\Cref{sec:well-posedness} recalls the lookdown construction of \cite{DK99} and provides a proof through the Markov Mapping Theorem.
The corresponding dual process is identified in \Cref{sec:duality}, where we also examine the genealogy.
Code for simulation is provided at \url{https://github.com/mushunrek/GammaPiModel.jl}.

%\Cref{sec:simulations} contains simulations of the process and draws on several statistics to illustrate the departure from Kingman.

\section{Model and main results} \label{sec:model}

\subsection{The $($\texorpdfstring{$\bm{\gamma}$}{\gamma}$,\Pi)$-Fleming-Viot process} \label{ssec:definition_model}
Throughout this work, we consider measure-valued populations taking values in the space $\mathcal{M}_F(\mathbb{K})$ of finite measures on a compact Polish type space $\mathbb{K}$, usually $\mathbb{K} = \{a,A\}$ or $\mathbb{K} = [0,1]$. 
Denote by $\mathcal{M}_1(\mathbb{K})\subseteq \mathcal{M}_F(\mathbb{K})$ the set of probability measures on $\mathbb{K}$.

\begin{defin}[Events and Characteristics]
In the context of deaths and births, we will always denote pairs $\bm z, \bm\gamma\in\mathbb{R}^2$ by bold letters and their entries by $(z_d,z_b)$ and $(\gamma_d,\gamma_b)$.
\begin{enumerate}
	\item We denote by
	\[
	\mathcal{Z} := \Big( [0,1]\times [0,+\infty)\Big) \setminus \{(0,0), (1,0)\}
	\]
	the \emph{event space}. Throughout this paper, its elements will be referred to as \emph{events} and be denoted by $\bm z =: (z_d, z_b)$.
	Here, $z_d$ denotes the proportion of individuals to be killed, and $z_b$ the mass of offspring added during the event $\bm z$.
	
	\item We define the space $\mathfrak{C}$ of \emph{characteristics} to contain exactly the pairs $(\bm\gamma,\Pi)\in \mathbb{R}^2\times \mathcal{M}(\mathcal{Z})$ satisfying
	\begin{enumerate}[label=\roman*)]
		\item $\bm\gamma \geq \bm 0$,
		\item the subordinator condition
		\begin{equation}\label{eq:subordinator_condition}
		\int_\mathcal{Z} \bm z \d\Pi(\bm z) < +\infty,
		\end{equation}
		\item and the balance condition
		\begin{equation}\label{eq:balance_condition}
			\gamma_d + \int_\mathcal{Z} z_d \d\Pi(\bm z) = \gamma_b + \int_\mathcal{Z} z_b\d\Pi(\bm z).
		\end{equation}
	\end{enumerate}
	In the above, $\mathcal{M}(\mathcal{Z})$ denotes the set of $\sigma$-finite measures on $\mathcal{Z}$.
	The inequalities are to be understood entrywise.
	In analogy to $\bm z$, we will write $\Pi_d$ and $\Pi_b$ for the marginals of $\Pi$.
\end{enumerate}
\end{defin}
The events $(0,0)$ and $(1,0)$ are excluded to avoid trivialities: the first does not change the population, whereas the second kills off all individuals without replacing any.
%As noted before, the balance equation \eqref{eq:balance_condition} differs from \Cref{eq:discrete_balance_equation} as we now consider the \emph{rescaled} offspring number.
The second condition \eqref{eq:subordinator_condition} is necessary for the process to be well-defined. It is referred to as \emph{subordinator condition} as it implies together with $\bm\gamma\geq \bm 0$ that $(\bm\gamma,\Pi)\in\mathfrak{C}$ defines a two-dimensional subordinator with drift $\bm\gamma$ and jump measure $\Pi$.

\begin{defin}[The $(\bm\gamma,\Pi)$-Fleming-Viot process]
Let $(\bm\gamma,\Pi)\in\mathfrak{C}$ be a characteristic.
Define the operator
\begin{equation}\label{eq:operator_for_sigma}
\begin{array}{ll}\mathcal{L}^{\bm\gamma,\Pi} \Phi(\sigma)\!\!\!\!\! &= 
\big( \gamma_b - \gamma_d \sigma(\mathbb{K})\big) D\Phi\left(\sigma, \dfrac{\sigma}{\sigma(\mathbb{K})}\right)\\[10pt]
&\qquad \displaystyle+ \int_\mathbb{K}\int_\mathcal{Z} \left[ \Phi\Big( (1-z_d)\sigma + z_b\delta_\kappa\Big) - \Phi(\sigma)\right] \d\Pi(\bm z)\dfrac{\mathrm{d}\sigma(\kappa)}{\sigma(\mathbb{K})}
\end{array}
\end{equation}
for all bounded Fréchet-differentiable $\Phi\in C_b^1\mathcal{M}_F(\mathbb{K})$ with bounded derivative.
Any $\mathcal{M}_F(\mathbb{K})$-valued solution $\sigma$ to the martingale problem for $\mathcal{L}^{\bm\gamma,\Pi}$ is called a \emph{$(\bm\gamma,\Pi)$-Fleming-Viot process}.
\end{defin}
In \Cref{subsec:discussion} we give a detailed motivation of the model choice. Any $\sigma\in\mathcal{M}_F(\mathbb{K})\setminus \{0\}$ can be uniquely decomposed into $\sigma = N\rho$, where $N := \sigma(\mathbb{K})$ is the \emph{population size} and the probability measure $\rho\in \mathcal{M}_1(\mathbb{K})$ is the \emph{type distribution}.
If $\sigma$ is a $(\bm\gamma,\Pi)$-Fleming-Viot process, then $N$ is solution to the martingale problem for 
\begin{equation}\label{eq:operator_for_N}
\mathcal{L}^{\bm\gamma,\Pi} f(N) = \big( \gamma_b - \gamma_d N\big) f'(N) + \int_\mathcal{Z} \left[ f\Big( (1-z_d)N + z_b\Big) - f(N)\right] \d\Pi(\bm z).
\end{equation}
Similarly, $\sigma$ is a $(\bm\gamma,\Pi)$-Fleming-Viot process if and only if $(N,\rho)$ is solution to the martingale problem for
\begin{equation}\label{eq:operator_for_N_rho}
\begin{aligned}
&\mathcal{L}^{\bm\gamma,\Pi} F(N,\rho)\\
&= 
\big( \gamma_b - \gamma_d \sigma(\mathbb{K})\big) \partial_N F(N,\rho)\\[10pt]
&\qquad \displaystyle+ \int_\mathbb{K}\int_\mathcal{Z} \left[ F\Big( (1-z_d)N + z_b, \left(1 - \overline{\bm z}\right)\rho + \overline{\bm z}\delta_\kappa\Big) - F(N,\rho)\right] \d\Pi(\bm z)\d\rho(\kappa),
\end{aligned}
\end{equation}
where we denote by
\[
\overline{\bm z} := \overline{\bm z}(N,\bm z) := \dfrac{z_b}{(1-z_d)N + z_b}
\]
the \emph{effective impact} of an event $\bm z$ at population size $N$. 
We keep the dependence on $N$ implicit to improve the readability.
In both cases, we abuse slightly the notation for $\mathcal{L}^{\bm\gamma,\Pi}$ by applying it to functions $f\in \mathcal{C}_b^1(\mathbb{R})$ and $F\in C_b^1((0,+\infty)\times \mathcal{M}_1(\mathbb{K}))$ respectively.
Furthermore, the equivalence holds only because the population does not vanish, which we will prove rigorously later on, see \Cref{sec:populationsize}.
%
%The motivational section is included in the definition of the $(\bm\gamma, \Pi)$-Fleming-Viot by setting
%\begin{equation} \label{eq:oldprocess}
%	\Pi := \Lambda(\{0\})\cdot (\delta_0\otimes \Pi_b) +  \int_{(0,1]} (\delta_u \otimes \nu^u) \;\dfrac{\Lambda(\mathrm{d}u)}{u}
%\end{equation}
%and $\bm\gamma := \Lambda(\{0\})\cdot (1,\gamma_b)$. 

\begin{figure}[!htb]
	\centering
	\includegraphics[width=0.8\linewidth]{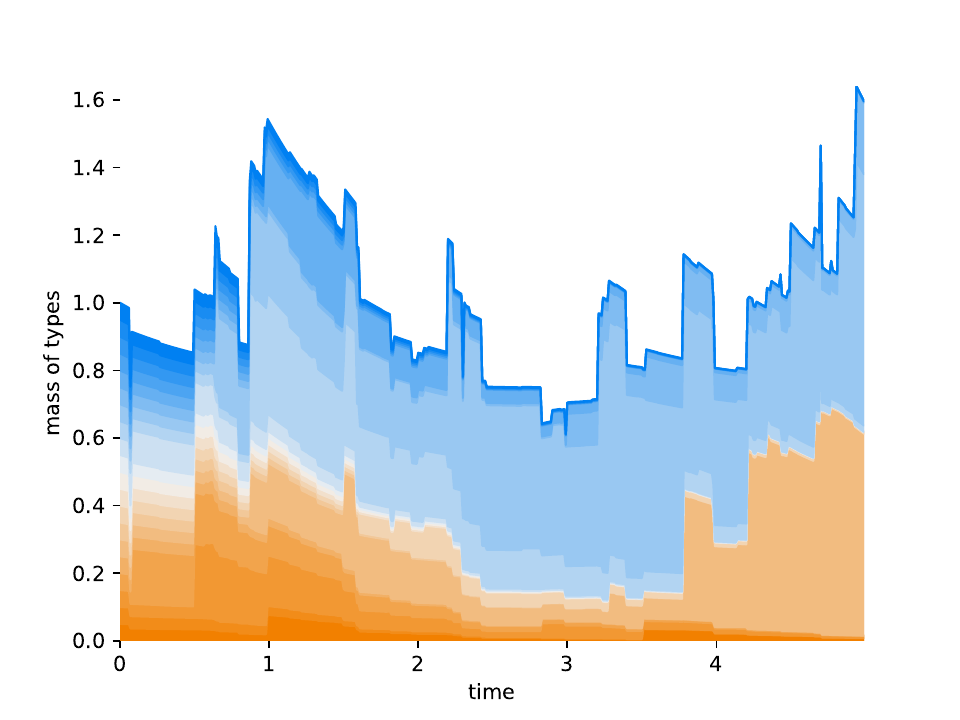}
	\caption{
		Simulation of a $(\gamma, \Pi)$ process: we start with $20$ different types of equal mass $0.05$, each represented by a different colour.
		Here, $\bm{\gamma} \approx (0.74, 1)$ and the marginals of $\Pi$ are independent such that $\Pi_{d}$ has density $\mathds{1}_{[0.0001, 0.3]} (x) \frac{3}{x}$, and $\Pi_{b}$ has density $ \propto \mathds{1}_{[0.0001, 0.4]} (x)\frac{1}{x}$. 		
	}
	\label{fig:piprocess}
\end{figure}

\subsection{Main results} \label{sec:mainresults}

\paragraph*{Properties of the model}
We show in \Cref{sec:well-posedness} that the model can be understood with the framework from \cite{DK99}.
In particular, the following result follows solely from studying the population process.
\begin{restatable}{theorem}{WellPosedness} \label{theo:well-posedness}
	The martingale problem for the $(\bm\gamma,\Pi)$-Fleming-Viot process is well-posed for any characteristic $(\bm{\gamma}, \Pi) \in \mathfrak{C}$. 
	Furthermore, the unique solution is a Hunt process, \emph{i.e} quasi-left continuous and strongly Markovian.
\end{restatable}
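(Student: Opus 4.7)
The plan follows the roadmap announced in the introduction: first analyse the population size in isolation, then build a candidate solution via the lookdown construction of \cite{DK99}, and finally transfer uniqueness to the measure-valued problem through the Markov Mapping Theorem.

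As a first step I would study the one-dimensional martingale problem for the operator \eqref{eq:operator_for_N}. This generates a non-negative pure-jump process with mean-reverting drift $\gamma_b - \gamma_d N$ and state-dependent jumps $N \mapsto (1-z_d)N + z_b$ of rate $\Pi$. The subordinator condition \eqref{eq:subordinator_condition} makes the jump part integrable, so a direct construction via a Poisson point process on $[0,+\infty)\times\mathcal{Z}$ yields a pathwise unique, non-exploding solution. A Lyapunov argument on $N$ controls moments, while $\gamma_b,z_b\geq 0$ together with the contraction $N\mapsto (1-z_d)N$ exclude extinction. Strict positivity of $N$ is what justifies passing freely between the formulations \eqref{eq:operator_for_sigma}, \eqref{eq:operator_for_N} and \eqref{eq:operator_for_N_rho}.

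I would then recall the lookdown construction of \cite{DK99}: individuals carry levels in $\mathbb{N}$, and at each atom $(t,\bm z)$ of a Poisson point process on $[0,+\infty)\times\mathcal{Z}$ with intensity $\d{t}\otimes\d\Pi(\bm z)$ the configuration undergoes a deterministic relabelling that keeps a proportion $1-z_d$ of the current mass and inserts offspring of mass $z_b$, the parental type being read off the lowest retained level. Conditional on the population-size trajectory from the first step, this produces an exchangeable $\mathbb{K}^{\mathbb{N}}$-valued process whose empirical measure, rescaled by $N(t)$, almost surely solves the martingale problem for $\mathcal{L}^{\bm\gamma,\Pi}$; this yields existence. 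For uniqueness I would invoke the Markov Mapping Theorem in the form adapted to lookdown processes in \cite{EK19}: the projection from labelled configurations $(N,(Y_i)_{i\in\mathbb{N}})$ to $\sigma = N\cdot\mu_{(Y_i)}$, with $\mu_{(Y_i)}$ the de Finetti measure of $(Y_i)$, intertwines the generator of the joint lookdown process with $\mathcal{L}^{\bm\gamma,\Pi}$ on a suitable core. Since the joint process is pathwise unique by direct construction, well-posedness transfers to the measure-valued martingale problem.

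With well-posedness in hand, the Hunt property is almost immediate: the strong Markov property follows from uniqueness of the martingale problem, and quasi-left continuity holds because $\Pi$ is finite outside any neighbourhood of $(0,0)$, so large jumps do not accumulate and cannot coincide with predictable times. The main obstacle will be the verification of the intertwining identity required by the Markov Mapping Theorem: the effective impact $\overline{\bm z}(N,\bm z)$ couples the jump dynamics of $\rho$ to the current value of $N$ in a way not covered verbatim by the setup of \cite{DK99}, so the identity must be checked by hand on a core of functions in $C_b^1\mathcal{M}_F(\mathbb{K})$, and the measurable section of the projection used in the theorem must be chosen compatibly with this coupling. That computation is where the real analytic work of the proof lies.
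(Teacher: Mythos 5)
Your proposal follows essentially the same route as the paper: analyse the population size process first (an SDE driven by a two-dimensional subordinator, with strict positivity of $N$), build the lookdown process of \cite{DK99}, verify the intertwining identity $\alpha(Ah)=\mathcal{L}^{\bm\gamma,\Pi}(\alpha h)$ by an explicit computation, and conclude via the Markov Mapping Theorem, with the Hunt property coming from well-posedness and compact containment. The only details you gloss over that the paper spells out are the dominating function $\psi(N,\mathsf{k})\asymp N+2^n(1+1/N)$ required by the Markov Mapping Theorem (whose integrability along the solution rests precisely on the non-extinction of $N$), and the fact that one needs uniqueness of the lookdown \emph{martingale problem} rather than merely pathwise uniqueness of the constructed solution, which the paper obtains by localising to stopping times at which the jump rates are bounded.
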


In the context of $\Lambda$-Fleming-Viot processes, reproduction events are parametrised by an \emph{impact} $u\in (0,1]$, representing the proportion of the population to be replaced.
Events of small impact $u\approx 0$ are usually modelled to occur at an accelerated rate of order $u^{-2}$.
Indeed, small reproduction events are expected to be much more frequent in large populations than large scale, catastrophic events with impact $u > 0$.
In the limit $u\downarrow 0$, the dynamics can be approximated by the Wright-Fisher diffusion which therefore can be thought of as ``$0$-impact Fleming-Viot".

As soon as events are balanced only in mean \eqref{eq:balance_condition}, events $\bm z \approx \bm 0$ can only happen at rate proportional to $\vert \bm z\vert^{-1}$ (see the discussion in \Cref{subsec:discussion}).
This is illustrated by the subordinator condition \eqref{eq:subordinator_condition} which ensures the existence of the $(\bm\gamma,\Pi)$-Fleming-Viot process.
Mirroring the approach from the $\Lambda$-Fleming-Viot process, it is natural to ask whether an additional process is needed to describe the ``$\bm 0$-impact" behaviour.
The next theorem gives a negative answer: the class of $(\bm\gamma,\Pi)$-models is closed under a natural notion of the convergence of characteristics.

For a Polish space $E$, denote by $\mathbb{D}_{[0,+\infty)}(E)$ the space of $E$-valued càdlàg processes with the usual Skorokhod topology.
On $\mathcal{M}_F(\mathbb{K})$, we consider the Polish topology of weak convergence induced by the topology on $\mathbb{K}$.

\begin{defin}\label{defin:convergence_characteristics}
For a sequence $\big((\bm\gamma^{(n)}, \Pi^{(n)})\big)_n\subset \mathfrak{C}$ of characteristics and a characteristic $(\bm\gamma,\Pi)\in\mathfrak{C}$, we write $(\bm\gamma^{(n)},\Pi^{(n)})\underset{n}{\longrightarrow} (\bm\gamma,\Pi)$ if and only if 
\begin{equation}\label{eq:Levy_condition}
\lim_n \bm\gamma^{(n)}\cdot\nabla g(\bm 0) + \int_\mathcal{Z} g(\bm z) \d\Pi^{(n)}(\bm z) = \bm\gamma\cdot\nabla g(\bm 0) + \int_\mathcal{Z} g(\bm z)\d\Pi(\bm z)
\end{equation}
for all $g\in C_b(\mathbb{R}^2)$ differentiable in $\bm 0$ such that $g(\bm 0) = 0$.
\end{defin}

\begin{restatable}{theo}{TheoremClosedness} \label{theo:closedness}
	The set of $(\bm{\gamma}, \Pi)$-Fleming-Viot processes is closed under limits of characteristics in $\mathfrak{C}$ in the following sense: if $\sigma_0^{(n)}$ converges weakly to some $\sigma_0\in\mathcal{M}_F(\mathbb{K})\setminus\{0\}$	and if $(\bm\gamma^{(n)},\Pi^{(n)})\rightarrow (\bm\gamma,\Pi)\in\mathfrak{C}$, then the sequence of $(\bm\gamma^{(n)},\Pi^{(n)})$-Fleming-Viot processes started in $\sigma_0^{(n)}$ converges weakly in $\mathbb{D}_{[0,+\infty)}\big(\mathcal{M}_F(\mathbb{K})\big)$ to the $(\bm\gamma,\Pi)$-Fleming-Viot process started in $\sigma_0$.
\end{restatable}

Finally, it is of interest to understand the relation between the $(\bm\gamma,\Pi)$- and the classical $\Lambda$-Fleming-Viot model. 
The next result illustrates that any $\Lambda$-Fleming-Viot process can be obtained by letting the jump measure concentrate on the diagonal $z_d = z_b$.

\begin{theo} \label{theo:convergencelambdafvprocess}
	Let $\big((\bm\gamma^{(n)}, \Pi^{(n)})\big)_n\subset \mathfrak{C}$ be a sequence of characteristics and $\Lambda$ be a finite measure on $[0,1]$. Assume that
	\[
	\lim_n \int_\mathcal{Z} g(\bm z) \d\Pi^{(n)}(\bm z) = \int_0^1 g(u,u)\;\dfrac{\mathrm{d}\Lambda(u)}{u^2}
	\]
	for all $g \in C_b (\mathbb{R}^2)$ vanishing in a neighbourhood of the origin and that
	\[
	\lim_{\epsilon\downarrow 0}\lim_n \int_{\mathcal{Z}\cap[0,\epsilon]^2} \left(\dfrac{z_b}{1 - z_d + z_b}\right)^2 \d\Pi^{(n)}(\bm z) = \Lambda(\{0\}).
	\]
	Let $\big((N_0^{(n)}, \rho_0^{(n)})\big)_n \subset (0, + \infty) \times \mathcal{M}_1(\mathbb{K})$ be a sequence with weak limit $(1, \rho_0) \in (0, + \infty) \times \mathcal{M}_1 (\mathbb{K})$, and denote by $(N^{(n)}, \rho^{(n)})$ the $(\bm\gamma^{(n)}, \Pi^{(n)})$-Fleming-Viot process started from $(N_0^{(n)}, \rho_0^{(n)})$.
	%If
	%\begin{equation}\label{eq:technical_condition_metacontinuity}
	%\sup_n \mathbb{E}\left[ \sup_{t\leq T} \left(\dfrac{1}{N^{(n)}_t}\right)^3\right] < +\infty,
	%\end{equation}
	%for every $T > 0$, 
	Then, $(N^{(n)},\rho^{(n)})$ converges weakly to $(1,\rho)$, where $\rho$ is a $\Lambda$-Fleming-Viot process started from $\rho_0$.
\end{theo}

\begin{figure}[!htb]
	\centering
	\includegraphics[width=0.9\linewidth]{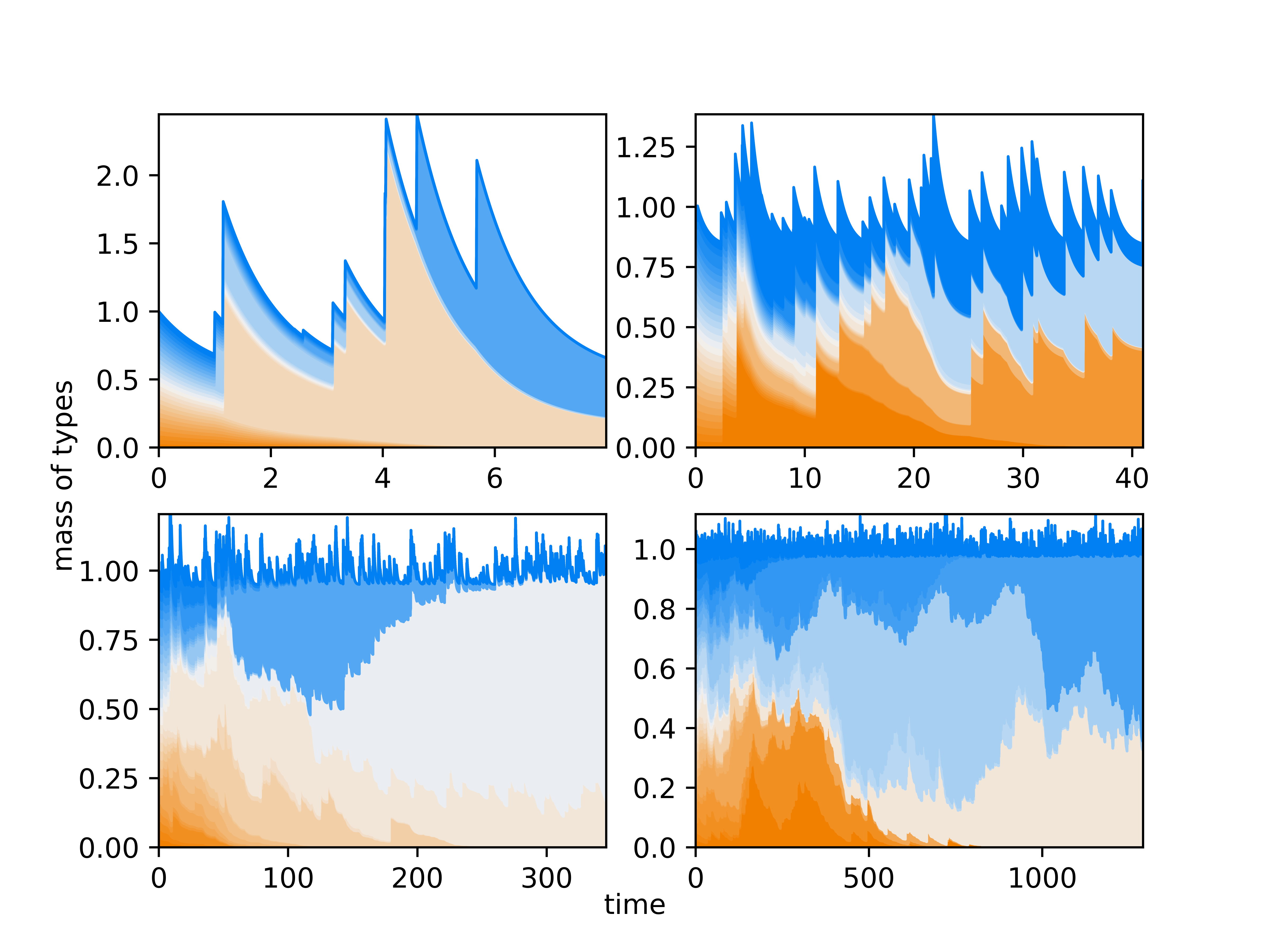}
	\caption{
		Illustration of \Cref{theo:convergencelambdafvprocess} in the case of a finite number of types. 
		The limiting process is a multi-type Wright-Fisher diffusion. 
		On the left-hand side of each figure, we start with $20$ different types of equal proportions, each represented by a different colour. 
		From the top left-hand figure to the bottom right-hand figure, we have used 
		\begin{equation*}
			\Pi_i := \delta_0\otimes \mathrm{Unif}([0,k_i])
		\end{equation*}
		where $k_1 = 1, k_2 = 1/3, k_3 = 1/10, k_4 = 1/20$. 
		The drift $\bm \gamma_i$ is given by $(\gamma_i)_d = 1$ and 
		\[
			(\gamma_i)_b = 1 - \int_\mathcal{Z} {z}_b \d{\Pi(\bm{z})} = 1 - \dfrac{k_i}{2}.
		\]
	}
	\label{fig:convergencetoWF}
	\vspace{-0.7cm}
\end{figure}

\paragraph*{Genealogy and duality}
As an infinite-population limit does not describe individuals any more, its genealogy can only be given a biological interpretation by considering the model as a limit of individual-based models.
In \Cref{subsec:genealogy}, we prove the convergence of an individual-based lookdown process to our model based on \cite[Theorem 3.2]{DK99}. 
This allows us to view the coalescent originating from tracing the lowest levels in the lookdown construction as providing the genealogy of the model.

The same coalescent can usually be used as a \emph{dual process} to the type distribution process.
Indeed, the lookdown construction from \Cref{ssec:contruction_random_environment} immediately implies a duality relation; see \Cref{subsec:duality}.
However, due to the dependence on the population size, the type distribution is not Markovian on its own. 
Using the lookdown construction, we will turn this non-Markovian duality into a pathwise duality in the sense of \cite{JK14}. 
Interestingly, this will provide us with an alternative proof of well-posedness of the $(\bm\gamma,\Pi)$-Fleming-Viot martingale problem that does not require the uniqueness of the associated lookdown process.
Furthermore, our method will provide us with a semi-explicit description of the coalescent, conditional on the driving noise.
We do not expect to obtain a more explicit description as the coalescent not only depends on the time reversal of the population size, but also on the offspring sizes not available from the population size process; see \Cref{subsec:duality} for details.

For this reason, we analyse two fundamental properties of the coalescent, \emph{fixation} and \emph{coming down from infinity}, by studying the genealogy through the lookdown process forwards-in-time.
We say that fixation occurs at time $t$ if the corresponding type distribution is a Dirac measure on a single type.
Through the lookdown construction, this is equivalent to proving that at time $t$ for every $n\in\mathbb{N}$, the $n$-th individual is a descendant of the lowest level individual.
We say that fixation occurs if the above random time is a.s.~finite.
Weakening the above, we say that quasi-fixation occurs if for every $n\in\mathbb{N}$, there exists an a.s.~finite time until the lowest $n$ levels are descendants from the lowest level.

\begin{restatable}[Quasi-fixation]{theo}{PropositionFixation} \label{pro:fixation}
	Consider the lookdown construction of the $(\bm{\gamma}, \Pi)$-Fleming-Viot process from \Cref{sec:well-posedness}. Assume that the population process $N_t$ is ergodic (cf. \Cref{lem:ergodicity}) and that the marginal $\Pi_b$ of $\Pi$ governing the births does not only charge zero.
	Then, for any $n \geq 2$, after an a.s.~finite time, the individuals associated to the lowest $n$ levels will be descendants of the individual associated to the lowest label.
\end{restatable}

Coming down from infinity describes the property that the coalescent started from infinitely many lineages contains only finitely many lineages after a finite amount of time.
Fixation cannot occur if the associated coalescent does not come down from infinity.
To complement the above, we show that the coalescent \emph{contains dust} at all times, i.e.~that the coalescent started from infinitely many lineages contains infinitely many singletons at all times and in particular does not come down from infinity.
For $\Lambda$-coalescents which are not star-shaped, i.e.~with $\Lambda(\{1\}) = 0$, this has been shown in the acclaimed paper \cite{Pit99} to be equivalent to
\begin{equation} \label{eq:containsdust}
	\int_0^1 u \; \dfrac{\Lambda (\d u)}{u^2} < \infty.
\end{equation}
Heuristically, this always holds for the $(\bm\gamma, \Pi)$-Fleming-Viot process, as we assume the subordinator condition \eqref{eq:subordinator_condition} for the measure $\Pi$.

\begin{restatable}[The coalescent contains dust]{theo}{PropositionNotComingDown} \label{pro:notcomingdown}
	Consider the lookdown construction of the $(\bm{\gamma}, \Pi)$-Fleming-Viot process from \Cref{sec:well-posedness}. 
	Then, for any time $t > 0$, the probability for the lowest lineage to not have had offspring before time $t$ is positive.
	
	If $\Pi_d$ does not charge $1$, this guarantees that, at every time $t$, there are a.s.~infinitely many ancestral lineages that do not coalesce in the time interval $[0,t]$.
\end{restatable}

\subsection{Discussion} \label{subsec:discussion}

The $(\bm\gamma,\Pi)$-Fleming-Viot model is a natural extension of the $\Lambda$-Fleming-Viot paradigm.
To make this connection apparent, let us consider the following intermediate example between the $\Lambda$- and the full $(\bm\gamma,\Pi)$-Fleming-Viot framework.

For small $u\approx 0$, let $\nu^u$ be a distribution on $[0,+\infty)$ with mean $u$.
In the following, $\nu^u[f(z_b)]$ will denote the expectation of $f$ w.r.t.~$z_b\sim \nu^u$.
The population size process of the $(\bm 0, \delta_u\otimes \nu^u)$-Fleming-Viot model, then, has the generator
\begin{align*}
	\mathcal{L}^{u} f(N) &= \nu^u\Big[ f\big((1-u)N + {z}_b^u\big) - f(N)\Big] \\
	&\approx \nu^u\Big[ f(N + {z}_b^u) - uN f'(N+{z}_b^u) - f(N) \Big]\\
	&\approx -uNf'(N) + \nu^u\Big[ f(N+{z}_b^u) - f(N)\Big],
\end{align*}
where we make use of the fact that $\mathbb{E}[f'(N+z_b)] \approx f'(N)$ as $u\downarrow 0$.
Whereas the first term is of order $u$, this is not necessarily true for the second.
Instead, we will make the natural assumption that it behaves in the limit like a Lévy process.
This is the same as asking that there are $\gamma_b, \sigma_b\geq 0$ and a $\sigma$-finite measure $\Pi_b$ on $(0,+\infty)$ satisfying $\int_0^{+\infty} 1\wedge{z}_b^2 \d{\Pi_b({z}_b)} < +\infty$ such that
\begin{equation}\label{eq:first_encounter_of_Levy_condition}
	\lim_{u\downarrow 0}\dfrac{1}{u}\nu^u[g({z}_b^u)] = \gamma_b g'(0) + \sigma_b g''(0) + \int_0^{+\infty} g({z}_b) \d{\Pi_b({z}_b)}
\end{equation}
for all $g:\mathbb{R}\rightarrow\mathbb{R}$ smooth enough and satisfying $g(0) = 0$. 
In our case, however, the jumps are all positive so that the stronger conditions $\sigma_b = 0$ and $\int_0^{+\infty} 1\wedge z_b \d\Pi_b(z_b) < +\infty$ hold.
This is a particular case of \Cref{eq:Levy_condition}.
Consequently, if we consider a Lévy subordinator $(L_t)_{t\geq 0}$ with characteristics $(\gamma_b,\Pi_b)$, the measures $\nu^u := Law(L_u)$ satisfy \Cref{eq:first_encounter_of_Levy_condition} with $\sigma_b = 0$.
Putting everything together, we obtain as a special case of \Cref{theo:closedness} that
\[
\mathcal{L}^u f(N) \approx u\cdot\left((\gamma_b - N)f'(N) + \int_0^{+\infty} f(N + z_b) - f(N) \d\Pi_b(z_b)\right).
\]

Adding a non-constant death proportion does not alter the above substantially.
Indeed, the only part that becomes more subtle is why we should not be able to see a diffusion in the limit as $z_b - z_dN$ can both be positive and negative.
This can be seen heuristically by the following Taylor expansion (that assumes both $z_d$ and $z_b$ to be of order $u$):
\begin{align*}
&\int_\mathcal{Z} f(N - z_dN + z_b) - f(N)\d\Pi^u(\bm z) \\
&\approx \int_\mathcal{Z} (z_b - z_dN)f'(N - z_dN + z_b) + (z_b - z_dN)^2f''(N - z_dN + z_b)\d{\Pi^u(\bm z)}\\
&\approx \left(\int_\mathcal{Z} (z_b - z_dN)\d\Pi^u(\bm z)\right)\cdot f'(N) + \left(\int_\mathcal{Z} (z_b - z_dN)^2\d\Pi^u(\bm z)\right)\cdot f''(N).
\end{align*}
If $\Pi^u$ is a probability measure, then the first integral is of order $u$ whereas the second is of order $u^2$.
In particular, whenever the first integral is non-zero, there is no way to speed up the dynamics to see a diffusion.
For a more complete treatment, see also \cite[Chapter VII, Section 3]{JS03}.
In the $\Lambda$-Fleming-Viot paradigm, this does not pose a problem as $N \equiv 1$ constantly so that for $z_b \equiv u \equiv z_d$ the first order term vanishes.
In the setting of a fluctuating population size, however, the first term can only vanish for at most one value of $N$. 
More precisely, under the balance condition \eqref{eq:balance_condition}, it vanishes exactly for $N=1$.
One should see a diffusion appear only in the setting where $N$ is forced to stay infinitesimally close to 1, i.e.~in the setting of \Cref{theo:convergencelambdafvprocess}, leading back to the $\Lambda$-Fleming-Viot model.
The particular choice of the balance condition \eqref{eq:balance_condition} is not relevant.
The fraction
\begin{equation}\label{eq:carrying_capacity}
K := \dfrac{\gamma_b + \int_\mathcal{Z} z_b \d\Pi(\bm z)}{\gamma_d + \int_\mathcal{Z} z_d\d\Pi(\bm z)}
\end{equation}
captures the \emph{carrying capacity} of the system to which the population experiences an exponential drift.
If we consider a $(\bm\gamma,\Pi)$-Fleming-Viot process $(N,\rho)$ such that it satisfies \Cref{eq:carrying_capacity} instead of the balance condition \eqref{eq:balance_condition}, then we may define
\[
\widehat{N}_t := \frac{N_t}{K},\qquad \widehat{\gamma} := \left(\gamma_d, \frac{\gamma_b}{K}\right)\quad\text{ and }\quad \int_\mathcal{Z} g(\bm z)\d{\widehat{\Pi}}(\bm z) := \int_\mathcal{Z} g\left(z_d, \frac{z_b}{K}\right)\d\Pi(\bm z)
\]
to obtain a $(\widehat{\gamma},\widehat{\Pi})$-Fleming-Viot process $(\widehat{N}, \rho)$ that has carrying capacity $1$.

\section{The population size process} \label{sec:populationsize}

Unsurprisingly, the population size process is crucial to the study of $(\bm\gamma, \Pi)$-Fleming-Viot processes. 
Hereafter, we will collect basic properties that we will use several times in the remainder of the paper. 

Let $(\bm\gamma, \Pi)\in\mathfrak{C}$ be a characteristic.
We will consider the operator \eqref{eq:operator_for_N}
\[
\mathcal{L}^{\bm\gamma,\Pi}f(N) = (\gamma_b - \gamma_dN)f'(N) + \int_\mathcal{Z} \Big( f\big( (1-{z}_d)N + {z}_b\big) - f(N)\Big) \d{\Pi(\bm{z})},
\]
defined for any $f\in \mathcal{C}_c^2( \mathbb{R} )$.

\begin{theo}[Well-Posedness]\label{theo:well-posedness_total_population}
	The martingale problem for $\mathcal{L}^{\bm\gamma,\Pi}$ is well-posed.
	Furthermore, the unique solution for a given initial condition is a conservative strong Markov process that is concentrated on $(0,+\infty)$ whenever the initial condition is in $(0, +\infty)$. 
	More precisely, if 
	\[\tau_\epsilon := \inf\{t\geq 0\;:\; N_t \not\in (\epsilon, 1/\epsilon)\text{ or  } N_{t-}\not\in (\epsilon, 1/\epsilon)\},
	\] 
	then
	\begin{equation}\label{eq:N_doesnt_hit_zero}
	\lim_{\epsilon\downarrow 0} \mathbb{P}\Big(\tau_\epsilon \leq T\Big) = 0
	\end{equation}
	for all finite time horizons $T\geq 0$.
	
	If $\bm L$ is a Lévy subordinator of characteristic $(\bm \gamma, \bm 0, \Pi)$, a solution can be constructed as the strong solution to
	\[
	\mathrm{d}N_t = \binom{-N_{t-}}{1}\cdot\mathrm{d}{\bm L_t}.
	\]
	Equivalently, one obtains the process by taking a Poisson point process $\xi$ on $[0,+\infty)\times \mathcal{Z}$ with mean intensity $\mathrm{d}t\otimes \Pi(\mathrm{d}\bm{z})$ and considering the unique strong solution to
	\[
	\mathrm{d}N_t = (\gamma_b - \gamma_d N_t)\mathrm{d}t + \int_\mathcal{Z} {z}_b - {z}_d N_{t-}\;\xi(\mathrm{d}t, \mathrm{d}\bm{z}).
	\]
	
	Furthermore, the process is a Feller process if and only if the first marginal $\Pi_d$ has no atom on $1$.
\end{theo}
\begin{proof}
Since $\mathcal{L}^{\bm\gamma,\Pi}$ maps $C_c^2(\mathbb{R})$ into $C_b(\mathbb{R})$, \cite[Theorem 2.3] {EquivSDEMart} ensures that any solution to the martingale problem on the one-point-compactification of $(0,+\infty)$ is a weak solution to the SDE
\[
\mathrm{d}N_t = \binom{-N_{t-}}{1}\cdot\mathrm{d}\bm L_t
\]
and vice-versa.
As the coefficient is globally Lipschitz, there exists a pathwise unique, non-exploding, strong solution to this SDE. 
Weak uniqueness follows and we may conclude the well-posedness of the martingale problem. 

Next, we prove \Cref{eq:N_doesnt_hit_zero}.
First, assume that $\Pi_d$ does not charge 1.
Enumerating the jumps of the driving noise $\bm L$, we may bound $N_t$ a.s.~by
\[
e^{-\gamma_d t} \left(\prod_{t_i \leq t} \left(1 - z_d^i\right)\right) N_0 \leq N_t \leq N_0 + \gamma_b t + \binom{0}{1}\cdot \bm L_t.
\]
Since the lower bound is non increasing and the upper bound is non decreasing, it suffices to show that the lower bound is strictly positive and the upper bound is finite.
For the upper bound, this is immediate.
For the lower bound, we note that the product is nonzero if and only if $\sum_{t_i\leq t} {z}_d^i$ is finite, which follows from
\[
\mathbb{E}\Bigg[ \sum_{t_i\leq t} {z}_d^i\Bigg] = t\int_{\mathcal{Z}} {z}_d \d\Pi(\bm{z}) < +\infty.
\]
Now, if $\Pi_d$ charges $1$, write $I$ for the set of jump indices satisfying $z_d^i = 1$.
If $i\in I$, then $N_{t_i}$ has the law $\nu_b = \Pi(\{1\}\times \cdot)/\Pi(\{1\}\times (0,+\infty))$.
In particular, we may bound
\[
N_t \geq e^{-\gamma_d T}\left(\prod_{\substack{t_i\leq T \\ i\not\in I}} (1-z_d^i)\right)\cdot \min(N_0, N_b),
\]
where $N_b\sim \nu_b$ is independent of both $N_0$ and the values of $z_d^i$, $i\not\in I$.
As before, we conclude that this random variable is a.s.~finite.

Finally, we apply \cite[Corollary 1.2]{K18a} which states that the Feller property is equivalent to
\[
\lim_{\vert N\vert \to +\infty} \Pi\left\{ \bm{z}\in\mathbb{R}^2\;:\; {z}_b - N{z}_d \in -B(N,r)\right\}  = 0
\]
for all $r > 0$, where $B(N,r)$ denotes the ball of radius $r$ around $N$. 
This is exactly the case if the first marginal of $\Pi$ does not charge the point $1$.
\end{proof}

The next result will be of practical interest when proving different tightness results.

\begin{corol}[Compact Containment]\label{cor:compact_containment_N}
Let $(\bm\gamma^{(\alpha)}, \Pi^{(\alpha)})_\alpha\subset \mathfrak{C}$ be a family of characteristics satisfying
\[
\sup_\alpha \left( \gamma^{(\alpha)}_b + \int_\mathcal{Z} {z}_b\d \Pi^{(\alpha)}(\bm{z})\right) < +\infty.
\]
Furthermore, suppose that the family $(N^{(\alpha)}_0)_\alpha$ of $(0,+\infty)$-valued random variables is tight. 
Let $(N_t^{(\alpha)})_{t\geq 0}$ be the population size process with characteristic $(\bm\gamma^{(\alpha)},\Pi^{(\alpha)})$ started in $N_0^{(\alpha)}$. 
Then, for any finite time horizon $T \geq 0$ and any threshold $\epsilon > 0$, there exists a compact $\Gamma\subseteq [0,+\infty)$ such that
\[
\inf_\alpha \mathbb{P}\Big( N^{(\alpha)}_t \in\Gamma\text{ for all }t\in [0,T]\Big) \geq 1 - \epsilon.
\]
\end{corol}
\begin{proof}
Let $\bm L^{(\alpha)}$ denote the Lévy subordinator corresponding to $N^{(\alpha)}$. 
As the strong solution to
\[
\mathrm{d}N_t^{(\alpha)} = \binom{-N_{t-}}{1}\cdot\mathrm{d}{\bm L^{(\alpha)}},
\]
the process $N^{(\alpha)}$ is almost surely bounded from above by the Lévy subordinator $L_t^{(\alpha, 2)} := \binom{0}{1}\cdot \bm L^{\alpha}$ of characteristic $\left(\gamma_b^{(\alpha)}, \Pi_b^{(\alpha)}\right)$.
Since subordinators are monotonically increasing, we may bound $L_t^{(\alpha, 2)}$ from above by $L_T^{(\alpha, 2)}$ for any $t\in[0,T]$. 
Now,
\[
\mathbb{E}\left[L_T^{(\alpha, 2)}\right] = T\left(\gamma_b^{(\alpha)} + \int_\mathcal{Z} {z}_b\d\Pi^{(\alpha)}(\bm{z})\right) \leq C_T
\]
for a constant $C_T$ depending only on $T$. Setting $\Gamma := \left[0, \frac{C_T}{\epsilon}\right]$, we obtain
\begin{align*}
\mathbb{P}\left( \sup_{t\in[0,T]} N_t^{(\alpha)} \in \Gamma\right) &\geq \mathbb{P}\Big( L_T^{(\alpha, 2)}\in \Gamma\Big) \geq 1 - \mathbb{P}\left( L_T^{(\alpha, 2)} \geq \dfrac{C_T}{\epsilon}\right) = 1 - \epsilon. \qedhere
\end{align*}
\end{proof}

We end this section with the long time behaviour of the population size process. 
This will be of interest when investigating whether there is fixation of a given type.
The total variation distance of two measures $\mu$ and $\nu$ on a common measurable space is defined as
\[
\Vert \mu - \nu\Vert_{TV} := \sup_A \vert \mu(A) - \nu(A)\vert,
\]
where the supremum ranges over all measurable sets.

\begin{prop}[Ergodicity] \label{lem:ergodicity}
	Suppose that $\gamma_d > 0$ and $\Pi\neq 0$.
	Then, there exists a unique invariant measure $\mu^*$ for the $(\gamma,\Pi)$-Fleming-Viot process. 
	Furthermore, this measure is concentrated on $(0,+\infty)$ and has a finite first moment in the sense that
	\[
	\int_0^{+\infty} n \d{\mu^*(n)} < +\infty.
	\]
	
	Whenever $N_0\sim \mu_0$ satisfies $\mathbb{E}[N_0] < +\infty$, the population size process $N$ of a $(\bm{\gamma}, \Pi)$-Fleming-Viot process started in $N_0$ is exponentially ergodic: if $\mu_t$ denotes the distribution of $N_t$, one has
		\[
		\Vert \mu_t - \mu^*\Vert_{TV} \leq C_{\mu_0}e^{-Ct}
		\]
		for two constants $C,C_{\mu_0} > 0$, where $C$ is independent of $\mu_0$.
\end{prop}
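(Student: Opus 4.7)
The plan is to verify the two hypotheses of the Meyn--Tweedie--Harris theorem for exponential ergodicity in total variation: a Foster--Lyapunov drift condition and a uniform minorization on sublevel sets of the Lyapunov function.

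First I would take the linear Lyapunov function $V(N)=N$. A direct computation using the balance condition \eqref{eq:balance_condition} yields
\[
\mathcal{L}^{\bm\gamma,\Pi}V(N)=c_0(1-N),\qquad c_0:=\gamma_d+\int_{\mathcal{Z}}z_d\d\Pi(\bm z)>0,
\]
and Dynkin's formula gives the uniform-in-time bound $\mathbb{E}_{N_0}[N_t]\leq N_0 e^{-c_0 t}+(1-e^{-c_0 t})$. Existence of $\mu^\ast$ then follows by Krylov--Bogolyubov applied to the time averages $\tfrac1T\int_0^T\mathbb{P}_{N_0}(N_t\in\cdot)\d t$, whose tightness on $[0,+\infty)$ is immediate from this moment bound. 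Support of $\mu^\ast$ on $(0,+\infty)$ is inherited from the strict positivity of $N_t$ in \Cref{theo:well-posedness_total_population}, and finiteness of its first moment is a Fatou argument applied to the same bound.

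For uniqueness and the exponential rate I would first use a synchronous coupling: two copies $N^1,N^2$ driven by the same Poisson point process $\xi$ satisfy the pathwise estimate
\[
|N^1_t-N^2_t|=|N^1_0-N^2_0|\,e^{-\gamma_d t}\prod_{t_i\leq t}(1-z_d^i)\leq |N^1_0-N^2_0|\,e^{-\gamma_d t},
\]
which yields uniqueness of $\mu^\ast$ and exponential contraction in the Wasserstein-$1$ metric at rate $\gamma_d$.

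The hard part is the upgrade to total variation, since this synchronous coupling never achieves exact coincidence unless $\Pi_d$ charges $\{1\}$. I would handle it by verifying a local minorization $P_{t_0}(N,\cdot)\geq\varepsilon\,\nu(\cdot)$ uniformly in $N$ on any compact $K\subset(0,+\infty)$: splitting $\Pi$ into a large-jump part $\Pi_L$ with finite mass on $\{|\bm z|>\varepsilon_0\}$ and a small-jump remainder, and conditioning on the event ``exactly one $\Pi_L$-jump in $[0,t_0]$'', the conditional law of $N_{t_0}$ is the smooth pushforward of an exponentially distributed jump time by the flow $\dot N=\gamma_b-\gamma_d N$, convolved with the small-jump Lévy subordinator; this admits an absolutely continuous component bounded below uniformly on $K$. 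Combining this minorization with the drift condition above, Harris' theorem delivers the stated exponential convergence in total variation at a rate $C$ independent of $\mu_0$, with the prefactor $C_{\mu_0}$ coming from $\mathbb{E}[V(N_0)]=\mathbb{E}[N_0]$.
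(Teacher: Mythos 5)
Your architecture --- Foster--Lyapunov drift plus a local minorization obtained by randomizing the jump times --- is essentially the mechanism inside the reference the paper actually cites (the paper's own proof is a one-line appeal to Kulik's exponential ergodicity theorem for SDEs with jump noise, whose proof runs on exactly this drift-plus-Dobrushin scheme). The first three steps of your argument are correct and more informative than the paper's: the identity $\mathcal{L}^{\bm\gamma,\Pi}V(N)=c_0(1-N)$ with $c_0=\gamma_d+\int_\mathcal{Z} z_d\d{\Pi(\bm z)}$ follows from the balance condition \eqref{eq:balance_condition} (modulo the routine localization needed because $V\notin C_c^2$); Krylov--Bogolyubov applies because the Lipschitz SDE gives continuity in the initial condition, hence the $C_b$-Feller property even when the $C_0$-Feller property fails; and the synchronous coupling identity is exact and yields uniqueness of $\mu^*$ together with Wasserstein contraction at rate $\gamma_d$.

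The gap is in the minorization, and it is not cosmetic. Writing $\phi_s$ for the flow of $\dot N=\gamma_b-\gamma_d N$ and conditioning on a single large jump $\bm z$ at time $\tau$ (which is uniform, not exponential, on $[0,t_0]$ given exactly one jump), one finds
\[
\phi_{t_0-\tau}\big((1-z_d)\phi_\tau(N_0)+z_b\big)
=(1-z_d)e^{-\gamma_d t_0}N_0+\tfrac{\gamma_b}{\gamma_d}\big(1-(1-z_d)e^{-\gamma_d t_0}\big)
+e^{-\gamma_d(t_0-\tau)}\Big(z_b-\tfrac{\gamma_b}{\gamma_d}z_d\Big),
\]
so the pushforward of the jump time is degenerate exactly when $z_b=(\gamma_b/\gamma_d)z_d$, i.e.\ when the jump map and the flow share the fixed point $\gamma_b/\gamma_d$ and hence commute. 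There are admissible characteristics for which this happens for $\Pi$-a.e.\ jump: the balance condition then forces $\gamma_b=\gamma_d$ with $\Pi$ supported on the diagonal, e.g.\ $\bm\gamma=(1,1)$, $\Pi=\delta_{(1/2,1/2)}$. For that choice $N_t-1=(N_0-1)e^{-t}2^{-P_t}$ with $P$ a unit-rate Poisson process, so $\mu^*=\delta_1$ while $\mu_t$ is purely atomic and never charges $\{1\}$, whence $\Vert\mu_t-\mu^*\Vert_{TV}=1$ for all $t$. So your minorization cannot hold in the stated generality --- and indeed the total-variation conclusion itself fails there, so no repair is possible without an extra non-degeneracy hypothesis such as $\Pi$ not being concentrated on the ray $\{z_b=(\gamma_b/\gamma_d)z_d\}$ (a condition the cited theorem also requires implicitly). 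Away from that case your plan does work, but two details still need care: the $\tau$-images above are intervals whose location depends on $N_0$, so you must take $t_0$ large enough that $e^{-\gamma_d t_0}\,\mathrm{diam}(K)$ is small compared with $(1-e^{-\gamma_d t_0})\vert z_b-\gamma_b z_d/\gamma_d\vert$ to get a common minorizing interval over $K$; and the small-jump remainder is not an independent additive convolution, since those jumps also multiply the state by $(1-z_d)$.
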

\begin{proof}
The existence of an invariant measure with finite first moment and the exponential ergodicity is an application of \cite{Kul09}, see Proposition 0.1 and Theorem 1.2 therein. From \Cref{eq:N_doesnt_hit_zero} and ergodicity, it follows that $\mu^*$ is concentrated on $(0,+\infty)$.
\end{proof}

It is to be expected that this results extends to more general conditions, where $\gamma_d = 0$ as long as $\Pi_d$ is non-zero.
Since this is not the main focus of this paper, it is left for future work.

\section{Closedness in the space of characteristics} \label{sec:closedness}
Recall that $(\sigma_t)_{t \geq 0}$ is a $(\bm{\gamma}, \Pi)$-Fleming-Viot process if and only if it is a $\mathcal{M}_F(\mathbb{K})$-valued solution to the martingale for $\mathcal{L}^{\bm\gamma,\Pi}$ defined in \Cref{eq:operator_for_sigma}.
We will make use of the well-posedness result from \Cref{sec:well-posedness} which does not depend on this section.

This section is concerned with the proof of \Cref{theo:closedness}.
For the rest of this section, we consider a weakly convergent sequence $\sigma_0^{(n)}\to \sigma_0 \in \mathcal{M}_F(\mathbb{K})\setminus\{0\}$ and some characteristics $(\bm\gamma^{(n)}, \Pi^{(n)})\to (\bm\gamma,\Pi)\in\mathfrak{C}$ in the sense of \Cref{defin:convergence_characteristics}.
Write $\sigma^{(n)}$ for the $(\bm\gamma^{(n)}, \Pi^{(n)})$-Fleming-Viot process started from $\sigma^{(n)}_0$.
The proof is divided into two lemmas.

\begin{lem}[Tightness]
The sequence $(\sigma^{(n)})_{n}$ is tight.
\end{lem}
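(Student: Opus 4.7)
The plan is to verify a standard tightness criterion for $\mathcal{M}_F(\mathbb{K})$-valued càdlàg processes (of Roelly--Coppoletta / Jakubowski type): tightness in $\mathbb{D}_{[0,\infty)}(\mathcal{M}_F(\mathbb{K}))$ reduces to (a) compact containment, and (b) tightness of the one-dimensional projections $\int \phi \d\sigma^{(n)}$ in $\mathbb{D}_{[0,\infty)}(\mathbb{R})$ for $\phi$ ranging over a separating subclass of $C(\mathbb{K})$.

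For (a), I would first observe that since $\mathbb{K}$ is compact, the relatively compact subsets of $\mathcal{M}_F(\mathbb{K})$ are precisely those on which the total mass is uniformly bounded. Using the SDE representation from \Cref{theo:well-posedness_total_population}, the population size $N_t^{(n)} := \sigma_t^{(n)}(\mathbb{K})$ is dominated on $[0,T]$ by $N_0^{(n)} + \gamma_b^{(n)} T + L_T^{(n,2)}$, where $L^{(n,2)}$ is the second coordinate of the driving Lévy subordinator $\bm L^{(n)}$. Applying the convergence \eqref{eq:Levy_condition} to the test function $g(\bm z) = (z_d + z_b) \wedge 1$ yields the uniform bound $\sup_n (\gamma_d^{(n)} + \gamma_b^{(n)}) < \infty$, while \eqref{eq:Levy_condition} more generally implies that $L^{(n,2)}$ converges weakly in $\mathbb{D}_{[0,\infty)}(\mathbb{R})$ to a Lévy subordinator; in particular $L_T^{(n,2)}$ is tight. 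Combined with the weak convergence of $\sigma_0^{(n)}$, this gives tightness of $\sup_{t\leq T} N_t^{(n)}$ and hence compact containment in $\mathcal{M}_F(\mathbb{K})$.

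For (b), I would invoke Aldous' criterion. For $\phi \in C(\mathbb{K})$, a bounded stopping time $\tau \leq T$ and small $\delta > 0$, I would decompose the increment of $\int \phi \d\sigma^{(n)}$ between $\tau$ and $\tau+\delta$, obtained from the jump-SDE representation, into (i) a drift term, (ii) a small-jumps contribution $\{|\bm z| \leq \epsilon\}$, and (iii) a large-jumps contribution $\{|\bm z| > \epsilon\}$. On the compact-containment event $\{N^{(n)} \leq M \text{ on } [0,T]\}$, the drift is bounded by $(\gamma_b^{(n)} + \gamma_d^{(n)} M)\|\phi\|_\infty \delta$, hence $O(\delta)$ uniformly in $n$ by the bound above. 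The small-jumps part has $L^1$-norm at most $\|\phi\|_\infty (M+1)\,\delta \int_{\{|\bm z|\leq \epsilon\}}(z_d + z_b) \d\Pi^{(n)}(\bm z)$, and the large-jumps part originates from a Poisson point process of uniformly bounded rate $\Pi^{(n)}\{|\bm z| > \epsilon\}$, so the probability of witnessing any large jump inside a window of length $\delta$ is $O(\delta)$ uniformly in $n$.

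The main obstacle is that the total jump intensities $\int z_b \d\Pi^{(n)}$ and $\int z_d \d\Pi^{(n)}$ need not be uniformly bounded in $n$, which rules out a direct estimate via the generator acting on $\sigma \mapsto \int\phi\d\sigma$. The remedy is to exploit \eqref{eq:Levy_condition} in its truncated form: testing against the Lipschitz bump $g(\bm z) = (z_d + z_b) \wedge \epsilon$ (whose gradient at the origin is $(1,1)$) produces uniform bounds on $\int (z_d + z_b) \wedge \epsilon \d\Pi^{(n)}(\bm z)$ that tend to $0$ as $\epsilon \downarrow 0$, thanks to the subordinator condition for the limit $\Pi$. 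This uniform smallness of the small-jump intensity is exactly what closes the Aldous estimate in (ii), while \eqref{eq:Levy_condition} applied to a continuous cut-off of $\mathbbm{1}_{\{|\bm z| > \epsilon\}}$ handles (iii); together with (i), Aldous' condition for $\int\phi \d\sigma^{(n)}$ follows, and the tightness criterion concludes the lemma.
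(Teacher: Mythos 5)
Your route is genuinely different from the paper's and is, in outline, viable. The paper also starts from compact containment (via \Cref{cor:compact_containment_N}), but then finishes in essentially one line by invoking \cite[Theorem 3.9.4]{EK86}: one only needs to bound $\vert\mathcal{L}^{\bm\gamma^{(n)},\Pi^{(n)}}\Phi(\sigma)\vert$ by a constant multiple of $1+\sigma(\mathbb{K})$ uniformly in $n$ and to control $\mathbb{E}[N_t^{(n)}]$ on compact time intervals; no Aldous/Roelly-type analysis of the projections is needed. Moreover, the ``main obstacle'' you identify is not actually there: since $z_d\leq 1$, testing \eqref{eq:Levy_condition} with $g(\bm z)=(z_d+z_b)\wedge 1$ and using $\bm\gamma^{(n)}\geq\bm 0$ gives $\sup_n\big(\gamma_d^{(n)}+\int_{\mathcal Z} z_d\d\Pi^{(n)}(\bm z)\big)<+\infty$, and the balance condition \eqref{eq:balance_condition} transfers this to $\sup_n\big(\gamma_b^{(n)}+\int_{\mathcal Z} z_b\d\Pi^{(n)}(\bm z)\big)<+\infty$. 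This is precisely the hypothesis of \Cref{cor:compact_containment_N} and is what makes the paper's generator bound uniform in $n$. Your alternative derivation of compact containment through weak convergence of the subordinators $L^{(n,2)}$ is correct and self-contained, but becomes unnecessary once this observation is made.

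One intermediate claim in your Aldous step is false as stated: \eqref{eq:Levy_condition} does \emph{not} yield bounds on $\int_{\mathcal Z}(z_d+z_b)\wedge\epsilon\d\Pi^{(n)}(\bm z)$ that become uniformly small as $\epsilon\downarrow 0$, because mass of $\Pi^{(n)}$ near the origin may converge to the \emph{drift} $\bm\gamma$ rather than to $\Pi$. For instance, $\bm\gamma^{(n)}=\bm 0$ and $\Pi^{(n)}=n\,\delta_{(1/n,1/n)}$ satisfy \eqref{eq:subordinator_condition}, \eqref{eq:balance_condition} and converge in the sense of \Cref{defin:convergence_characteristics} to $\bm\gamma=(1,1)$, $\Pi=0$, yet $\int_{\mathcal Z}(z_d+z_b)\wedge\epsilon\d\Pi^{(n)}(\bm z)=2$ for all $n\geq 2/\epsilon$, so the quantity does not vanish as $\epsilon\downarrow 0$. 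Fortunately, uniform \emph{smallness} in $\epsilon$ is not what your estimate needs: the small-jump contribution to the increment over $[\tau,\tau+\delta]$ already carries the prefactor $\delta$, so the uniform bound $\sup_n\int_{\{\vert\bm z\vert\leq\epsilon\}}(z_d+z_b)\d\Pi^{(n)}(\bm z)\leq C_\epsilon<+\infty$ for a single fixed $\epsilon$ --- which does follow from \eqref{eq:Levy_condition} together with $\bm\gamma^{(n)}\geq\bm 0$ --- is enough to make the Markov-inequality step go to $0$ as $\delta\downarrow 0$ uniformly in $n$. With that correction your argument closes, but you should excise the smallness claim and the appeal to the subordinator condition for the limit $\Pi$, which plays no role here.
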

\begin{proof}
\Cref{cor:compact_containment_N} guarantees the compact containement condition for the population size process, and thus for the process $\sigma$, as sets of measures that are uniformly bounded by a constant are compact. 
By \cite[Theorem 3.9.4]{EK86}, it is then sufficient to show the uniform boundedness of the generator in expectation over finite time intervals:
\begin{align*}
\left\vert \mathcal{L}^{\bm\gamma^{(n)}, \Pi^{(n)}} \Phi(\sigma)\right\vert &\leq \Big(\gamma_b^{(n)} + \gamma_d^{(n)}\sigma(\mathbb{K})\Big)\Vert D\Phi\Vert_\infty\\
&\qquad + \int_\mathcal{Z} \Vert D_\Phi\Vert_\infty\Big( {z}_d^{(n)}\sigma(\mathbb{K}) + {z}_b^{(n)}\Big)\d{\Pi^{(n)}(\bm{z})}\\
&\leq 2C\Vert D\Phi\Vert_\infty\cdot\big(1 + \sigma(\mathbb{K})\big).
\end{align*}
We conclude by noting that the corresponding population size process $N_t^{(n)} = \sigma_t^{(n)}(\mathbb{K})$ satisfies
\[
\mathbb{E}\left[N_t^{(n)}\right] \leq \mathbb{E}\left[N_0^{(n)}\right] + Ct,
\]
which is uniformly bounded on any compact time interval.
\end{proof}

\begin{lemma}
	Any limit point of the family $(\sigma^n)_{n\geq 1}$ is a $(\bm\gamma, \Pi)$-Fleming-Viot process.
\end{lemma}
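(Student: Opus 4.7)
First I would extract from the tight family a subsequence converging in distribution to some $\sigma$ in $\mathbb{D}_{[0,+\infty)}(\mathcal{M}_F(\mathbb{K}))$ and, via Skorokhod representation, realise the convergence almost surely on a common probability space. \Cref{cor:compact_containment_N} ensures that $\sigma$ never leaves $\mathcal{M}_F(\mathbb{K})\setminus\{0\}$ on finite time intervals, so that $\mathcal{L}^{\bm\gamma,\Pi}\Phi(\sigma_\cdot)$ is well-defined. The goal is to verify that $\sigma$ solves the martingale problem for $\mathcal{L}^{\bm\gamma,\Pi}$; by the well-posedness stated in \Cref{theo:well-posedness}, this will force the whole sequence to converge.

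The key structural observation is that $\mathcal{L}^{\bm\gamma,\Pi}\Phi(\sigma)$ can be recast in a form suited to the convergence condition \eqref{eq:Levy_condition}. Setting $g_{\sigma,\kappa}(\bm z) := \Phi\big((1-z_d)\sigma + z_b\delta_\kappa\big) - \Phi(\sigma)$, Fréchet-differentiability of $\Phi$ gives $\nabla g_{\sigma,\kappa}(\bm 0) = (-D\Phi(\sigma,\sigma), D\Phi(\sigma,\delta_\kappa))$, and using the linearity of the directional derivative in its second argument one rewrites
\[
\mathcal{L}^{\bm\gamma,\Pi}\Phi(\sigma) = \int_\mathbb{K}\!\left[\bm\gamma\cdot\nabla g_{\sigma,\kappa}(\bm 0) + \int_\mathcal{Z} g_{\sigma,\kappa}(\bm z)\d\Pi(\bm z)\right]\!\frac{\d\sigma(\kappa)}{\sigma(\mathbb{K})}.
\]
For each fixed $(\sigma,\kappa)$, $g_{\sigma,\kappa}$ extends to an element of $C_b(\mathbb{R}^2)$ that vanishes and is differentiable at $\bm 0$. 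Since $\Pi^{(n)}$ and $\Pi$ are supported on $\mathcal{Z}$, the extension is harmless, so \eqref{eq:Levy_condition} yields pointwise convergence of the bracketed integrand and, by dominated convergence in $\kappa$, of $\mathcal{L}^{\bm\gamma^{(n)},\Pi^{(n)}}\Phi(\sigma)\to \mathcal{L}^{\bm\gamma,\Pi}\Phi(\sigma)$ for each fixed $\sigma$.

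It remains to pass to the limit in the martingale identity
\[
\mathbb{E}\!\left[H\big(\sigma^{(n)}_{\cdot\wedge s}\big)\cdot\Big(\Phi(\sigma^{(n)}_t) - \Phi(\sigma^{(n)}_s) - \int_s^t \mathcal{L}^{\bm\gamma^{(n)},\Pi^{(n)}}\Phi(\sigma^{(n)}_u)\d u\Big)\right] = 0,
\]
valid for any bounded continuous functional $H$ on paths up to time $s$. Boundary contributions are handled by restricting $s,t$ to the (dense) set of continuity points of $\sigma$, selected via a Fubini argument. The hard part is the integral term: the pointwise convergence of the generators must be upgraded to local uniform convergence on compacts in $\sigma$, uniformly in $n$. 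I would establish this using the uniform estimate $|g_{\sigma,\kappa}(\bm z)|\le \|D\Phi\|_\infty(z_d\sigma(\mathbb{K}) + z_b)\wedge 2\|\Phi\|_\infty$, together with a truncation of $\mathcal{Z}$ into small and large jumps: the small-jump part is controlled through a first-order Taylor expansion together with the drift correction encoded in \eqref{eq:Levy_condition}, while the large-jump part reduces to weak convergence of the restrictions of $\Pi^{(n)}$ outside a neighbourhood of $\bm 0$ applied to the equicontinuous family $\{g_{\sigma,\kappa}\}_{(\sigma,\kappa)}$. Combining this local uniform convergence with the almost sure convergence $\sigma^{(n)}\to\sigma$ and the compact containment, dominated convergence closes the martingale identification.
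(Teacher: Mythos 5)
Your core identity is the same one the paper uses: writing the generator as $\bm\gamma\cdot\nabla g(\bm 0) + \int_{\mathcal{Z}} g\,\mathrm{d}\Pi$ for a test function $g$ vanishing at the origin, so that the convergence of characteristics \eqref{eq:Levy_condition} directly yields convergence of the generators. (The paper integrates over $\kappa$ \emph{first}, defining a single function $f_\Phi^\sigma(\bm z)$, and applies \eqref{eq:Levy_condition} once; you apply it to each $g_{\sigma,\kappa}$ and then dominate in $\kappa$ — both work, the former is marginally cleaner.) Where you diverge is in the limit-identification machinery. The paper invokes \cite[Theorem 4.8.10]{EK86}, which only requires showing that the ``martingale defect'' $\mathbb{E}\big[(\Phi(\sigma^{(n)}_{t+\tau})-\Phi(\sigma^{(n)}_t)-\int_t^{t+\tau}\mathcal{L}^{\bm\gamma,\Pi}\Phi(\sigma^{(n)}_s)\,\mathrm{d}s)\prod_i h_i(\sigma^{(n)}_{t_i})\big]$ vanishes, i.e.\ the \emph{limiting} generator is only ever evaluated along the \emph{prelimit} processes. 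This reduces everything to the generator difference $|\mathcal{L}^{\bm\gamma^{(n)},\Pi^{(n)}}\Phi-\mathcal{L}^{\bm\gamma,\Pi}\Phi|$ plus a dominated-convergence bound, and entirely avoids the steps that make your write-up heavy: the Skorokhod representation, the restriction to continuity points, and the upgrade of pointwise to locally uniform convergence of $\sigma\mapsto\mathcal{L}^{\bm\gamma,\Pi}\Phi(\sigma)$. Your route is legitimate and would go through, but you pay for it with exactly the technical debt you acknowledge in your last paragraph, none of which needs to be discharged in the paper's version.

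One concrete slip: you cite \Cref{cor:compact_containment_N} for the claim that the limit $\sigma$ stays in $\mathcal{M}_F(\mathbb{K})\setminus\{0\}$. That corollary only provides an \emph{upper} bound ($\Gamma=[0,C_T/\epsilon]$ contains $0$); the statement that the population size stays away from zero is \eqref{eq:N_doesnt_hit_zero} in \Cref{theo:well-posedness_total_population}. Since your argument genuinely needs a lower bound on $\sigma(\mathbb{K})$ near which the generator (through $\sigma/\sigma(\mathbb{K})$) could otherwise misbehave in your uniform-convergence step, you should cite the correct result; the fact itself is available, so this is a misattribution rather than a gap.
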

\begin{proof}
	By \parencite[Theorem 4.8.10]{EK86} it suffices to show
	\begin{equation} \label{eq:identificationoflimit}
		\lim_{n \to \infty} \mathbb{E} \Bigg[ \Bigg( \Phi (\sigma_{t+\tau}^{(n)}) - \Phi (\sigma_t^{(n)}) - \int_t^{t+\tau} \mathcal{L}^{\bm{\gamma}, \Pi} \Phi (\sigma_s^{(n)}) \d s \Bigg) \prod_{i = 1}^k h_i (\sigma_{t_i}^{(n)}) \Bigg] = 0
	\end{equation}
	for all $k \geq 0, 0 \leq t_1 < t_2 < ... < t_k \leq t < t + \tau$ and $h_1,..., h_k \in C_b \mathcal{M}_F (\mathbb{K})$. 
	
	\Cref{eq:identificationoflimit} is satisfied in the prelimit in the sense that
	\begin{equation*} 
		\lim_{n \to \infty} \mathbb{E} \Bigg[ \Bigg( \Phi (\sigma_{t+\tau}^{(n)}) - \Phi (\sigma_t^{(n)}) - \int_t^{t+\tau} \mathcal{L}^{\bm{\gamma}^{(n)}, \Pi^{(n)}} \Phi (\sigma_s^{(n)}) \d s \Bigg) \prod_{i = 1}^k h_i (\sigma_{t_i}^{(n)}) \Bigg] = 0.
	\end{equation*}
	This reduces the statement necessary for proving the result to 
	\begin{equation*}
		\lim_{n \to \infty} \mathbb{E} \Bigg[ \int_t^{t + \tau} \Big\vert \mathcal{L}^{\bm{\gamma}^{(n)} , \Pi^{(n)}} \Phi (\sigma_s^{(n)}) - \mathcal{L}^{\bm{\gamma}, \Pi} \Phi (\sigma_s^{(n)}) \Big\vert \d s \prod_{i = 1}^k h_i (\sigma_{t_i}^{(n)}) \Bigg] = 0.
	\end{equation*}
	Using the tower property of conditional expectation, this statement is satisfied provided
	\begin{equation} \label{eq:identificationoflimitsufficientequation}
		\lim_{n \to \infty} \mathbb{E} \Bigg( \int_t^{t + \tau} \Big\vert \mathcal{L}^{\bm{\gamma}^{(n)}, \Pi^{(n)}} \Phi (\sigma_s^{(n)}) - \mathcal{L}^{\bm{\gamma}, \Pi} \Phi (\sigma_s^{(n)}) \Big\vert \d s \enspace \Bigg\vert \enspace \mathcal{F}_t \Bigg) =0.
	\end{equation}
	Let us introduce the family of functions
	\begin{equation*}
		f_\Phi^\sigma: \mathbb{R}^2 \rightarrow \mathbb{R},\quad ({z}_1, {z}_2) \mapsto \int_{\mathbb{K}} \Phi \big( (1 - {z}_2) \sigma + {z}_1 \delta_\kappa \big) - \Phi (\sigma) \frac{\d \sigma (\kappa)}{\sigma (\mathbb{K})},
	\end{equation*}
	which are continuously differentiable and satisfy $f_\Phi^\sigma(\bm{0}) = 0$.
	The following identity holds for such functions:
	\begin{equation*}
		\nabla f_\Phi^\sigma (\bm{0}) = D \Phi \left(\sigma, \frac{\sigma}{\sigma (\mathbb{K})}\right) \binom{1}{ - \sigma (\mathbb{K})}.
	\end{equation*}
	and thus
	\begin{equation*}
		(\gamma_b - \gamma_d \sigma (\mathbb{K})) D \Phi \Big( \sigma, \frac{\sigma}{\sigma (\mathbb{K})} \Big) = \bm\gamma \cdot \nabla f_\Phi (\bm{0}).
	\end{equation*}
	For $\sigma \in \mathcal{M}_F(\mathbb{K})$ the difference between the prelimiting and limiting generator is given by
	\begin{align*}
		&\Big\vert \mathcal{L}^{\bm{\gamma}^{(n)}, \Pi^{(n)}} \Phi (\sigma) - \mathcal{L}^{\bm{\gamma}, \Pi} \Phi (\sigma) \Big\vert\\
		&\leq \Bigg\vert \Big( \gamma_b^{(n)} - \gamma_d^{(n)} \sigma (\mathbb{K})\Big) D \Phi \Big(\sigma, \frac{\sigma}{\sigma(\mathbb{K})} \Big) - \Big( \gamma_b - \gamma_d \sigma (\mathbb{K})\Big) D \Phi \Big(\sigma, \frac{\sigma}{\sigma (\mathbb{K})} \Big) \\
		&\hspace{1cm} + \int_\mathcal{Z} \int_\mathbb{K} \left[ \Phi\Big( (1-{z}_d)\sigma + {z}_b\delta_\kappa\Big) - \Phi(\sigma)\right]  \dfrac{\mathrm{d}\sigma(\kappa)}{\sigma(\mathbb{K})} \d{\Pi^{(n)}(\bm{z})}   \\
		&\hspace{1cm} - \int_\mathcal{Z} \int_\mathbb{K} \left[ \Phi\Big( (1-{z}_d)\sigma + {z}_b\delta_\kappa\Big) - \Phi(\sigma)\right]  \dfrac{\mathrm{d}\sigma(\kappa)}{\sigma(\mathbb{K})} \d{\Pi(\bm{z})}   \Bigg\vert  \\
		%%%%%%%%%%%%%%%%%%%%%%%%%%%
		&\leq \Bigg\vert \bm{\gamma}^{(n)} \cdot \nabla f_\Phi (\bm{0}) + \int_\mathcal{Z} f_\Phi (\bm{{z}}) \d{\Pi^{(n)}(\bm{z})}   - \bm{\gamma} \cdot \nabla f_\Phi (\bm{0}) - \int_\mathcal{Z} f_\Phi (\bm{{z}}) \d{\Pi(\bm{z})}  \Bigg\vert.
	\end{align*}
	Hence, by the convergence of the characteristics, we obtain the pointwise limit
	\begin{equation*}
		\lim_{n \to \infty} \Big\vert \mathcal{L}^{\bm{\gamma}^{(n)}, \Pi^{(n)}} \Phi (\sigma) - \mathcal{L}^{\bm{\gamma}, \Pi} \Phi (\sigma) \Big\vert = 0.
	\end{equation*}
	Writing $N := \sigma (\mathbb{K})$ for the population size, we have
	\begin{align*}
		\Big\vert \mathcal{L}^{\bm{\gamma}^{(n)}, \Pi^{(n)}} \Phi (\sigma) - \mathcal{L}^{\bm{\gamma}, \Pi} \Phi (\sigma) \Big\vert &\leq \Vert D \Phi \Vert_\infty \Big( \sup_n \vert \gamma_b^{(n)} - \gamma_b \vert + \sup_n \vert \gamma_d - \gamma_d^{(n)} \vert N \Big) \\
		&\hspace{1cm} + \sup_n 2 \Vert D \Phi \Vert_\infty \int_{\mathcal{Z}} ({z}_b + {z}_d) \d \Pi^{(n)}(\bm{{z}}),
	\end{align*}
	which has finite mean. 
	The above is an integrable majorant and we may apply Lebesgue's Dominated Convergence to conclude.
\end{proof}

\section{Relation to the $\Lambda$-Fleming-Viot process} \label{sec:relationlambdafvprocess}
This section is devoted to the proof of \Cref{theo:convergencelambdafvprocess}, which illustrates that $(\bm\gamma, \Pi)$-Fleming-Viot processes can approximate $\Lambda$-Fleming-Viot processes arbitrarily well. 
We prove the result in two steps.
First, we prove that the population size concentrates on 1 by viewing it as the solution to an SDE with large drift.
Then, we verify the convergence of the type distribution.
More precisely, making use of \cite[Lemma 5.3]{BES04}, it is enough to show the convergence of the type composition up to the stopping time $\tau_\delta := \inf\{ t\geq 0\;:\; N_t\leq \delta\text{ or }N_{t-}\leq \delta\}$.

\begin{lem} \label{lem:convergencepopsize}
	Let $\big((\bm\gamma^{(n)}, \Pi^{(n)})\big)_n \subset \mathfrak{C}$ be a sequence of characteristics satisfying the assumptions of \Cref{theo:convergencelambdafvprocess}. Then the associated sequence $(N^{(n)})_n$ of population size processes converges weakly to $1$ in $\mathbb{D}(\mathbb{R})$.
\end{lem}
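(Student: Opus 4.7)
The plan is to show that $\phi_n(t) := \mathbb{E}\big[(N_t^{(n)} - 1)^2\big]$ vanishes as $n \to \infty$ for every fixed $t \geq 0$ via a Lyapunov argument, to combine this with tightness of $(N^{(n)})_n$ in $\mathbb{D}_{[0, +\infty)}(\mathbb{R})$, and to conclude through continuity of the constant limit that $N^{(n)} \to 1$ weakly in $\mathbb{D}_{[0, +\infty)}(\mathbb{R})$.

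Applying the operator from \eqref{eq:operator_for_N} to the Lyapunov function $V(N) := (N - 1)^2$ and using the balance condition \eqref{eq:balance_condition} to collapse the first-order terms into pure mean reversion to the carrying capacity, a direct computation gives
\begin{equation*}
\mathcal{L}^{\bm\gamma^{(n)}, \Pi^{(n)}} V(N) = -2 a_n (N - 1)^2 + R_n(N),
\end{equation*}
with $a_n := \gamma_d^{(n)} + \int_\mathcal{Z} z_d \d\Pi^{(n)}(\bm z) \geq 0$ and $R_n(N) := \int_\mathcal{Z} (z_b - z_d N)^2 \d\Pi^{(n)}(\bm z) \geq 0$. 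An expansion of $R_n$ around $N = 1$ combined with Cauchy--Schwarz yields $R_n(N) \leq 2 R_n(1) + 2 (N - 1)^2 \int_\mathcal{Z} z_d^2 \d\Pi^{(n)}$, so after the usual localisation and martingale argument $\phi_n$ satisfies the Grönwall-type bound
\begin{equation*}
\phi_n(t) \leq e^{-2 b_n t} \phi_n(0) + \frac{1 - e^{-2 b_n t}}{b_n} R_n(1),
\end{equation*}
where $b_n := \gamma_d^{(n)} + \int_\mathcal{Z} z_d(1 - z_d) \d\Pi^{(n)} \geq 0$. Since $\phi_n(0) \to 0$ by hypothesis, convergence $\phi_n(t) \to 0$ reduces to the single assertion $R_n(1) = \int_\mathcal{Z}(z_b - z_d)^2 \d\Pi^{(n)} \to 0$.

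To obtain $R_n(1) \to 0$, I split the integral at $\{z_b + z_d > \epsilon\}$. On the macroscopic part, $(z_b - z_d)^2$ vanishes on the diagonal of $\mathcal{Z}$, so a smooth cut-off combined with the first convergence hypothesis gives a limit of $\int (u - u)^2 \d\Lambda(u)/u^2 = 0$. On the microscopic part, the pointwise bound $(z_b - z_d)^2 \leq 2 z_b^2 + 2 z_d^2$ together with the second convergence hypothesis controls $\int_{\{z_b + z_d \leq \epsilon\}} z_b^2 \d\Pi^{(n)}$ (via the equivalence $z_b \asymp z_b/(1 - z_d + z_b)$ for small events); the matching bound on $\int_{\{z_b + z_d \leq \epsilon\}} z_d^2 \d\Pi^{(n)}$ is obtained by transferring the $z_b^2$-bound through the asymptotic diagonal identity $z_d \approx z_b$ under $\Pi^{(n)}$ near the origin encoded in the first hypothesis. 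Sending first $n \to \infty$ and then $\epsilon \downarrow 0$ gives the desired vanishing.

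For tightness of $(N^{(n)})_n$ in $\mathbb{D}_{[0, +\infty)}(\mathbb{R})$, the bound $\mathbb{E}[N_t^{(n)}] \leq \max(\mathbb{E}[N_0^{(n)}], 1)$, obtained by applying $\mathcal{L}^{\bm\gamma^{(n)},\Pi^{(n)}}$ to $f(N) = N$ and solving $\frac{\d}{\d t} \mathbb{E}[N_t^{(n)}] = a_n(1 - \mathbb{E}[N_t^{(n)}])$, combined with a Markov/stopping argument provides compact containment; Aldous's criterion on the increments is then verified from the Lyapunov estimate. Since the limit process is the constant $1$, hence continuous, tightness together with the marginal convergence $\phi_n(t) \to 0$ concludes the weak convergence $N^{(n)} \to 1$.

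\textbf{Main obstacle.} The delicate point is the microscopic control: the second hypothesis directly bounds only $\int (z_b/(1 - z_d + z_b))^2 \d\Pi^{(n)}$ near the origin, giving access to $z_b^2$ but not to $z_d^2$. Extracting the required bound on $\int_{\text{small}} z_d^2 \d\Pi^{(n)}$ requires carefully exploiting the diagonal concentration of the limiting measure $\d\Lambda(u)/u^2$ on $\{z_b = z_d\}$ to quantify the asymptotic near-equality of the two coordinates under $\Pi^{(n)}$.
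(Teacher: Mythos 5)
Your overall route is genuinely different from the paper's: you run a direct Lyapunov/Gr\"onwall argument on $\phi_n(t)=\mathbb{E}[(N_t^{(n)}-1)^2]$, whereas the paper splits into the case of bounded characteristics (handled by \cref{theo:closedness}) and the case $C_n=\gamma_b^{(n)}+\int z_b\dshort\Pi^{(n)}\to\infty$, which it treats by writing $N^{(n)}$ as an SDE with drift $C_n(1-N)$ plus a martingale of uniformly bounded quadratic variation and invoking Kurtz's large-drift averaging theorem. Your generator computation for $V(N)=(N-1)^2$ and the resulting bound $\phi_n(t)\le e^{-2b_nt}\phi_n(0)+\frac{1-e^{-2b_nt}}{b_n}R_n(1)$ are correct, and in principle this could replace the appeal to \cite{K91}. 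However, the next step is wrong: you reduce the problem to the assertion $R_n(1)=\int_\mathcal{Z}(z_b-z_d)^2\dshort\Pi^{(n)}\to 0$, and that assertion is \emph{false} under the hypotheses of \cref{theo:convergencelambdafvprocess}. Take $\Pi^{(n)}=n^2\,\delta_{(2/n,\,1/n)}$, $\gamma_d^{(n)}=0$, $\gamma_b^{(n)}=n$ (the balance condition reads $2n=n+n$, and the subordinator condition holds). For $g$ vanishing near the origin one has $\int g\dshort\Pi^{(n)}=n^2g(2/n,1/n)=0$ eventually, and $\int_{[0,\epsilon]^2}\big(z_b/(1-z_d+z_b)\big)^2\dshort\Pi^{(n)}=n^2(1/n)^2(1-1/n)^{-2}\to1$, so both hypotheses hold with $\Lambda=\delta_0$; yet $R_n(1)=n^2(1/n-2/n)^2=1$ for all $n$. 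The reason your repair strategy cannot work is that the first hypothesis only constrains $\Pi^{(n)}$ through test functions vanishing in a neighbourhood of the origin, so no ``diagonal concentration of $\Pi^{(n)}$ near the origin'' is encoded in it; in the example above $|z_b-z_d|$ is of the same order as $z_b$ on the support of $\Pi^{(n)}$.

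What actually saves the argument --- and what your own Gr\"onwall bound already contains --- is that one only needs $\frac{1-e^{-2b_nt}}{b_n}R_n(1)\to0$, i.e.\ that the fluctuation term is dominated by the restoring rate $b_n=\gamma_d^{(n)}+\int z_d(1-z_d)\dshort\Pi^{(n)}$ (in the example, $b_n\sim 2n$ while $R_n(1)=1$). Establishing $R_n(1)=o(b_n)$ when $b_n\to\infty$, together with $R_n(1)\to0$ when $b_n$ stays bounded, is the genuine content of the lemma; it is exactly the ``martingale part has bounded quadratic variation while the drift diverges'' dichotomy that the paper delegates to \cite[Theorem 6.2]{K91}, and your proposal does not supply it. Two secondary points: $\phi_n(0)\to0$ does not follow from the assumed weak convergence $N_0^{(n)}\to1$ without a uniform integrability or truncation step; and $R_n(N)$ involves $\int z_b^2\dshort\Pi^{(n)}$, whose finiteness and uniform control over the region where $z_b$ is large also needs justification (the limiting measure lives on the compact diagonal, but the hypotheses do not by themselves bound second moments at infinity).
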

\begin{lem} \label{lem:convergencetypes}
Let $\big((\bm\gamma^{(n)}, \Pi^{(n)})\big)_n \subset \mathfrak{C}$ be a sequence of characteristics satisfying the assumptions of \Cref{theo:convergencelambdafvprocess} for a finite measure $\Lambda$ on $[0,1]$, and suppose that $(N_0^{(n)}, \rho_0^{(n)})\to (1, \rho_0)$ weakly. 
Denote by $\rho^{(n)}$ the associated sequence of measure-valued type processes and write $\rho$ for the $\Lambda$-Fleming-Viot process started in $\rho_0$. 
For every $\delta > 0$, the sequences of processes $\rho^{(n)}$ stopped at $\tau_\delta$ converges weakly to $\rho$ stopped at $\tau_\delta$.
\end{lem}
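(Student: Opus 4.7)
The plan is to follow the standard martingale-problem route: first establish tightness of the stopped processes $(\rho^{(n)}_{\cdot\wedge\tau_\delta^{(n)}})_n$ in $\mathbb{D}_{[0,+\infty)}(\mathcal{M}_1(\mathbb{K}))$, then identify every limit point as a solution to the $\Lambda$-Fleming-Viot martingale problem, and finally invoke well-posedness of the latter. Throughout I will use the stopping, which guarantees $N^{(n)}_t\geq \delta$ on the stopped interval, together with \Cref{cor:compact_containment_N} for a matching upper bound and \Cref{lem:convergencepopsize}, which provides the concentration $N^{(n)}\to 1$ uniformly on compact time intervals in probability (so that in particular $\tau_\delta^{(n)}\to+\infty$).

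For tightness, compact containment is automatic since $\mathcal{M}_1(\mathbb{K})$ is compact. For the oscillation criterion I would fix $F\in C_b^2(\mathcal{M}_1(\mathbb{K}))$ and use the Lipschitz bound $|F((1-\overline{\bm z})\rho + \overline{\bm z}\delta_\kappa) - F(\rho)|\leq \|DF\|_\infty \overline{\bm z}$ to obtain
\begin{equation*}
\bigl|\mathcal{L}^{\bm\gamma^{(n)},\Pi^{(n)}} F(\rho^{(n)}_s)\bigr| \leq \|DF\|_\infty \int_\mathcal{Z} \overline{\bm z}(N^{(n)}_s,\bm z)\d{\Pi^{(n)}(\bm z)},
\end{equation*}
which on the stopped interval is uniformly bounded in $n$ thanks to the subordinator condition and the convergence of characteristics; a standard Aldous-type criterion then yields tightness.

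The identification step is the heart of the argument. For $F\in C_b^2(\mathcal{M}_1(\mathbb{K}))$ I would split the jump part of $\mathcal{L}^{\bm\gamma^{(n)},\Pi^{(n)}}F(\rho)$ at a threshold $\epsilon>0$, noting that $\overline{\bm z}(N^{(n)},\bm z)\to\phi(\bm z):= z_b/(1-z_d+z_b)$ as $N^{(n)}\to 1$ and that $\phi(u,u)=u$. On $\mathcal{Z}\setminus[0,\epsilon]^2$ the integrand
\begin{equation*}
\int_\mathbb{K}\bigl[F\bigl((1-\phi(\bm z))\rho + \phi(\bm z)\delta_\kappa\bigr) - F(\rho)\bigr]\d{\rho(\kappa)}
\end{equation*}
is bounded, continuous and vanishes at the origin, so the first convergence hypothesis combined with the concentration of $N^{(n)}$ produces the large-event portion of the $\Lambda$-Fleming-Viot generator restricted to $(\epsilon,1]$. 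For $\bm z\in[0,\epsilon]^2$, a second-order Taylor expansion of $F$ in $\overline{\bm z}$ yields the first-order term $\overline{\bm z}\,DF(\rho)[\delta_\kappa-\rho]$, which integrates to zero against $\rho(\d\kappa)$, leaving the principal contribution $\tfrac12\overline{\bm z}^2\int_\mathbb{K} D^2F(\rho)[\delta_\kappa-\rho,\delta_\kappa-\rho]\d{\rho(\kappa)}$. Integrating against $\Pi^{(n)}$ over $[0,\epsilon]^2$ and letting first $n\to\infty$ and then $\epsilon\downarrow 0$, the second convergence hypothesis yields the Wright--Fisher term $\tfrac12\Lambda(\{0\})\int_\mathbb{K} D^2F(\rho)[\delta_\kappa-\rho,\delta_\kappa-\rho]\d{\rho(\kappa)}$. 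Combining the two pieces reconstructs the full $\Lambda$-Fleming-Viot generator, and well-posedness of the associated martingale problem then identifies the limit uniquely.

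The main technical obstacle will be the small-event analysis: the third-order Taylor remainder $O(\overline{\bm z}^3)$ must be controlled uniformly in $n$ using only the asymptotic $\Pi^{(n)}$-integrability of $\overline{\bm z}^2$ provided by the second hypothesis, and the substitution $\overline{\bm z}(N^{(n)},\bm z)\mapsto \phi(\bm z)$ on $[0,\epsilon]^2$ must be carried out without compromising the delicate $\overline{\bm z}^2$-scaling that isolates the atom $\Lambda(\{0\})$. These two requirements interact subtly and will require careful truncation arguments exchanging the order of the limits in $n$, $\epsilon$, and the $N^{(n)}\to 1$ concentration.
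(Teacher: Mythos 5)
Your overall architecture (tightness, identification of limit points via an $\epsilon$-split of the generator with a Taylor expansion on $[0,\epsilon]^2$, then well-posedness of the $\Lambda$-Fleming-Viot martingale problem) is the same as the paper's, and your identification step is essentially correct. However, there is a genuine gap in the tightness step. You bound the generator by
\begin{equation*}
\bigl|\mathcal{L}^{\bm\gamma^{(n)},\Pi^{(n)}} F(\rho)\bigr| \leq \Vert DF\Vert_\infty \int_\mathcal{Z} \overline{\bm z}\left(N^{(n)},\bm z\right)\d{\Pi^{(n)}(\bm z)}
\end{equation*}
and claim this is uniformly bounded in $n$ on the stopped interval ``thanks to the subordinator condition and the convergence of characteristics''. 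Neither gives uniformity: the subordinator condition holds for each fixed $n$ only, and the hypotheses of \Cref{theo:convergencelambdafvprocess} control $\int g\d{\Pi^{(n)}}$ only for $g$ vanishing near the origin, together with the \emph{second-order} quantity $\int_{[0,\epsilon]^2}\overline{\bm z}(1,\bm z)^2\d{\Pi^{(n)}(\bm z)}$. In the canonical example where $\Pi^{(n)}$ concentrates on the diagonal with density $u^{-2}\d{\Lambda(u)}$ truncated at $u\geq 1/n$ and $\int_0^1 u^{-1}\d{\Lambda(u)}=+\infty$ (e.g.\ $\Lambda$ Lebesgue), one has $\int_\mathcal{Z}\overline{\bm z}\d{\Pi^{(n)}}\to+\infty$, so your first-order bound diverges. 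This is not a technicality: the whole point of the $\Lambda$-limit is that the first-order contribution survives only through its cancellation.

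The fix is exactly the cancellation you already invoke in the identification step: integrate over $\kappa\sim\rho$ \emph{before} bounding, use $\int_\mathbb{K} DF(\rho;\delta_\kappa-\rho)\d{\rho(\kappa)}=0$, and Taylor-expand to obtain a bound of order $\Vert D^2F\Vert_\infty\int_{[0,\epsilon]^2}\overline{\bm z}^2\d{\Pi^{(n)}}$ plus a controlled remainder on the small events, together with the trivial bound $2\Vert F\Vert_\infty\Pi^{(n)}(\mathcal{Z}\setminus[0,\epsilon]^2)$ on the large events; both are uniformly bounded in $n$ up to $\tau_\delta$ by the two hypotheses of \Cref{theo:convergencelambdafvprocess}. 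This is the content of \eqref{eq:boundedgenerator} in the paper, and it is also the reason the paper works with three-times Fréchet differentiable test functions (a dense subclass) rather than $C_b^2$: the cubic remainder must be controlled by $\epsilon\cdot\sup_n\int_{[0,\epsilon]^2} z_b^2\d{\Pi^{(n)}}$. Once the generator bound is established in this second-order form, your Aldous-type argument (or the \cite{BK93}-style convergence theorem the paper uses, which packages tightness and identification together) goes through.
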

As the total population size process converges to a deterministic function, the convergence of the components suffices for the joint convergence of $(N^{(n)}, \rho^{(n)})$ and \Cref{theo:convergencelambdafvprocess} follows.

\begin{proof}[Proof of \Cref{lem:convergencepopsize}]
	If $\Lambda$ satisfies the subordinator condition, in the sense that $$\int_0^1 u^{-1}\d\Lambda(u) < +\infty,$$ and $\sup_n \vert \bm\gamma\vert < +\infty$, then  necessarily $\gamma_d^{(n)} - \gamma_b^{(n)} \to 0$.
	In particular, the convergence follows from the closedness of the model, see \Cref{theo:closedness}.
	The remainder of the proof relies on \cite{K91} which considers stochastic processes that are forced onto a manifold by a large drift.
	
	To this end, suppose from now on that $C_n := \gamma_b^{(n)} + \int_\mathcal{Z} z_b\d\Pi^{(n)}(\bm z)\to+\infty$.
	From \Cref{theo:well-posedness_total_population}, we have that the population size process is a solution to
	\begin{align*}
		N_t^{(n)} &= N_0^{(n)} + C_n\int_0^t ( 1 - N_t^{(n)} )\d t + \int_0^t \binom{-N}{1}\cdot \d{\bm L^{(n)}_t},
	\end{align*}
	where $\bm L^{(n)}$ is a Lévy process with characteristic $\left( -\int_\mathcal{Z} \bm z\d\Pi^{(n)}(\bm z), \bm 0, \Pi^{(n)}\right)$.
	In this setting, the second term forces the process onto the singleton manifold $\{1\}$.
	Note that $\bm L^{(n)}$ is a martingale with quadratic variation
	\begin{align*}
		[\bm L^{(n)}]_t = t\int_\mathcal{Z} \bm z^2 \d\Pi^{(n)}(\bm z),
	\end{align*}
	which is uniformly bounded in $n$, and thus uniformly integrable.
	In particular, \cite[Condition (4.2)]{K91} is satisfied for $Y = \bm L^{(n)}$ by choosing $\delta = \infty$ and $M = \bm L^{(n)}$. This implies that \cite[Condition (C5.1)]{K91} holds, whereas \cite[Condition (C5.2)]{K91} follows automatically. We apply \cite[Theorem 6.2]{K91} to obtain the convergence $N^{(n)} \to 1$.
\end{proof}

\begin{proof}[Proof of Lemma \ref{lem:convergencetypes}]
	The generator of the Fleming-Viot process is given by
	\[
	A_1\Phi(\rho) := \dfrac{1}{2}\int_\mathbb{K}\int_\mathbb{K} D^2\Phi(\rho; \delta_\kappa,\delta_\chi) \;\left(\delta_\kappa - \rho\right)(\mathrm{d}{\chi}) \; \rho(\mathrm{d}\kappa),
	\]
	where $D^2$ denotes the second order Fréchet derivative. The generator is defined for all functions $\Phi\in\mathcal{C}(\mathcal{M}_1(\mathbb{K}))$ for which the second derivative exists for all $\kappa,\chi\in\mathbb{K}$ and is bounded continuous. The generator of the $\Lambda$-Fleming-Viot is $A \Phi (\rho) = A_1^\Lambda \Phi (\rho) + A_2^\Lambda \Phi (\rho)$, where 
	\begin{equation*}
		A_1^\Lambda := \Lambda(\{0\})A_1\quad\text{ and }\quad
		A_2^\Lambda \Phi(\rho) := \int_0^1 \int_{\mathbb{K}} \Big[ \Phi\big( (1- u)\rho + u\delta_\kappa\big) - \Phi(\rho)\Big] \d{\rho(\kappa)} \frac{\d \Lambda (u)}{u^2}.
	\end{equation*}
	Recall that the generator of the type distribution process $\rho^{(n)}$ is given by
	\begin{align*}
A^{(n)} \Phi(N,\rho) &= \int_\mathcal{Z}\int_\mathbb{K} \Phi\Big( (1-\overline{\bm z})\rho + \overline{\bm z}\delta_\kappa\Big) - \Phi(\rho)\d\rho(\kappa)\d\Pi^{(n)}(\bm z)\\
&= \int_{\mathcal{Z}\cap [0,\epsilon]^2} \int_\mathbb{K} \Phi\Big( (1-\overline{\bm z})\rho +  \overline{\bm z}\delta_\kappa\Big) - \Phi(\rho)\d\rho(\kappa)\d\Pi^{(n)}(\bm z)\\
&\qquad + \int_{\mathcal{Z}\setminus [0,\epsilon]^2} \int_\mathbb{K} \Phi\Big( (1-\overline{\bm z})\rho + \overline{\bm z}\delta_\kappa\Big) - \Phi(\rho) \d\rho(\kappa)\d\Pi^{(n)}(\bm z)\\
&=: A_1^{(n,\epsilon)} + A_2^{(n,\epsilon)}.
	\end{align*}
	with $\overline{\bm{{z}}} := \frac{{z}_b}{(1-{z}_d)N + {z}_b}$ for functions $\Phi\in C_b^1(\mathcal{M}_1(\mathbb{K}))$ depending only on $\rho$.
	Adapting \cite[Theorem 2.3]{BK93} to our situation, the statement follows from
	\begin{enumerate}
		\item 
		the sequence $(\rho^{(n)})_n$ is tight in $\mathbb{D}_{[0,T]}(\mathcal{M}_1(\mathbb{K}))$,
		\item
		and for all $\Phi: \mathcal{M}_1(\mathbb{K}) \rightarrow \mathbb{R} $ such that $\Phi$ is three times Fréchet differentiable with bounded derivatives,
		\begin{equation} \label{eq:boundedgenerator}
			\sup_{n \to \infty} \mathbb{E}\left[ \sup_{t\in [0,T]} \big\Vert A^{(n)} \Phi(N^{(n)}, \cdot) \big\Vert_\infty \right] \leq C_\Phi
		\end{equation}
		for $0 < C_\Phi < +\infty$ and for any $t\in [0,T]$
		\begin{equation} \label{eq:generatorconvergence}
			\lim_{n \to \infty} \mathbb{E}\left[ \big\Vert A^{(n)} \Phi(N^{(n)}, \cdot) - A\Phi \big\Vert_\infty \right]= 0.
		\end{equation}
		%		The symbol $A^{(n)} := \frac{1}{k_n}\mathcal{L}^{\bm\gamma^{(n)},\Pi^{(n)}}$ denotes the prelimiting accelerated generator with $N$ fixed, see below.
	\end{enumerate}
	These conditions present a slight modification of \cite[Theorem 2.3]{BK93}, where we have used that the three times Fréchet differentiable functions are dense in the domain of $A$ and that the proof remains valid if \eqref{eq:boundedgenerator} and \eqref{eq:generatorconvergence} hold in expectation. Now, tightness of $(\rho^{(n)})_n$ follows from \eqref{eq:boundedgenerator} and the compactness of $\mathcal{M}_1(\mathbb{K})$, so that we will concentrate on proving \eqref{eq:boundedgenerator} and \eqref{eq:generatorconvergence}.

	Applying Taylor's expansion to the inner part, we obtain
	\begin{align*}
		&A^{(n, \epsilon)}_1\Phi(N, \rho) \\
		%&= \int_{\mathcal{Z} \setminus [0, \epsilon]^2} \int_\mathbb{K} \Big[ \Phi\big( (1- \overline{\bm{{z}}})\rho + \overline{\bm{{z}}}\delta_\kappa\big) - \Phi(\rho)\Big] \d{\rho(\kappa)} \d{\Pi^{(n)}(\bm{z})}\\
		&= \int_{\mathcal{Z} \cap [0, \epsilon]^2} \int_\mathbb{K} \left[ \overline{\bm{{z}}}\cdot D\Phi(\rho; \delta_\kappa - \rho) + \overline{\bm{{z}}}^2\cdot \dfrac{1}{2}D^2 \Phi(\rho; \delta_\kappa-\rho, \delta_\kappa-\rho) + \mathcal{O}\left(\overline{\bm{{z}}}^3\right)\right] \d{\Pi^{(n)}(\bm{z})}.
	\end{align*}
	The first summand is equal to zero due to the identity
	\[
	\int_\mathbb{K} D\Phi(\rho; \delta_\kappa -\rho) \d{\rho(\kappa)} = \int_\mathbb{K}\int_\mathbb{K} \Big[ D\Phi(\rho;\delta_\kappa) - D\Phi(\rho;\delta_\chi)\Big] \d{\rho(\kappa)}\d{\rho(\chi)} = 0.
	\]
	The second summand equals
	\begin{align*}
		&\int_\mathbb{K} D^2\Phi(\rho; \delta_\kappa - \rho, \delta_\kappa - \rho) \d{\rho(\kappa)}\\
		&= \int_\mathbb{K} \Big[ D^2\Phi (\rho;\delta_\kappa, \delta_\kappa - \rho) - D^2\Phi(\rho; \rho, \delta_\kappa - \rho)\Big] \d{\rho(\kappa)}\\
		&= \int_\mathbb{K} \int_\mathbb{K} D^2\Phi(\rho; \delta_\kappa, \delta_\chi) \; (\delta_\kappa - \rho)(\mathrm{d}\chi)\d{\rho(\kappa)} \\
		&\hspace{1cm} - \int_\mathbb{K}\int_\mathbb{K}\int_\mathbb{K} \Big[ D^2\Phi(\rho; \delta_\eta, \delta_\kappa) - D^2\Phi(\rho; \delta_\eta, \delta_\theta)\Big] \d{\rho(\eta)}\d{\rho(\theta)}\d{\rho(\kappa)}\\
		&= \int_\mathbb{K} \int_\mathbb{K} D^2\Phi(\rho; \delta_\kappa, \delta_\chi) \; (\delta_\kappa - \rho)(\mathrm{d}\chi)\d{\rho(\kappa)} = A\Phi(\rho)
	\end{align*}
	and the third can be bounded by
	\[
	\dfrac{\epsilon}{(1-\epsilon)^3}\dfrac{1}{N^3}\cdot \sup_n\int_{\mathcal{Z}} z_b^2 \d\Pi^{(n)}(\bm z).
	\]
	Altogether,
	\[
	A^{(n, \epsilon)}_1 \Phi(N,\rho) = \left(\int_{\mathcal{Z} \cap [0, \epsilon]^2}  \overline{\bm{{z}}}^2  \d{\Pi^{(n)}(\bm{z})}\right) \cdot A_1\Phi(\rho) + \mathcal{O}\left(\dfrac{\epsilon}{N^3}\right)
	\]
	which is bounded up to the stopping time $\tau_\delta$.
	The boundedness of $A_2^{(n,\epsilon)}$ is immediate from the fact that $\Pi^{(n)}$ is finite outside a neighbourhood of $0$:
	\begin{equation*}
	\vert A_2^{(n, \epsilon)} \Phi (N, \rho) \vert \leq 2 \Vert \Phi \Vert_\infty \sup_n \int_{\mathbb{Z} \setminus [0, \epsilon]^2} \d \Pi^{(n)} (\bm{z}) < + \infty.
	\end{equation*}
	
To prove \eqref{eq:generatorconvergence} we first bound
\begin{equation} \label{eq:generatorconvergenceinequality}
\begin{aligned}
	&\lim_n \mathbb{E}\Big[ \big\Vert A^{(n)}\Phi(N^{(n)}, \cdot) - A\Phi \big\Vert_\infty\Big] \\
	&\leq \limsup_{\epsilon\downarrow 0}\lim_n \mathbb{E}\Big[ \big\Vert A^{(n,\epsilon)}_1\Phi -  A_1^\Lambda \Phi \big\Vert_\infty + \big\Vert A^{(n,\epsilon)}_2\Phi - A^{(\Lambda, \epsilon)}_{2,2}\Phi \big\Vert_\infty + \big\Vert A^{(\Lambda,\epsilon)}_{2,1} \Phi \big\Vert_\infty\Big],
\end{aligned}
\end{equation}
where
\begin{align*}
	A_{2, 1}^{(\Lambda,\epsilon)} \Phi(\rho) &:= \int_0^\epsilon \int_{\mathbb{K}} \Big[ \Phi\big( (1- u)\rho + u\delta_\kappa\big) - \Phi(\rho)\Big] \d{\rho(\kappa)} \frac{\d \Lambda (u)}{u^2},\\
	A_{2,2}^{(\Lambda,\epsilon)} \Phi(\rho) &:= \int_\epsilon^1 \int_{\mathbb{K}} \Big[ \Phi\big( (1- u)\rho + u\delta_\kappa\big) - \Phi(\rho)\Big] \d{\rho(\kappa)} \frac{\d \Lambda (u)}{u^2}.
\end{align*}
The first summand of inequality \eqref{eq:generatorconvergenceinequality} is bounded by
\begin{align*}
& \big\vert A_1^{(n,\epsilon)}\Phi (N, \rho) - A_1^\Lambda\Phi (\rho) \big\vert \\
&\leq \Bigg\vert \int_{\mathcal{Z} \cap [0,\epsilon]^2} \Bigg[ \left(\dfrac{{z}_b}{(1-{z}_d)N_t^{(n)} + {z}_b}\right)^2 - \left(\dfrac{z_b}{1 - z_d + z_b}\right)^2 \Bigg] \d\Pi^{(n)}(\bm z) \Bigg\vert \big\Vert A_1\Phi \Vert_\infty\\
& \quad + \left\vert \int_{\mathcal{Z}\cap [0,\epsilon]^2} \left(\dfrac{z_b}{1 - z_d + z_b}\right)^2\d\Pi^{(n)}(\bm z) - \Lambda(\{0\}) \right\vert  \big\Vert A_1\Phi \big\Vert_\infty + \mathcal{O}\left(\dfrac{\epsilon}{N^3}\right)
\end{align*}
Here, the second part vanishes due to the assumption on $\Lambda(\{ 0 \})$. The first and the third parts can be treated up to $\tau_\delta$ by using the convergence of the population size and by noting that
	\begin{equation*}
	\begin{aligned}
		&\Bigg\vert \int_{\mathcal{Z}\cap [0,\epsilon]^2} \left[ \left(\dfrac{{z}_b}{(1-{z}_d)N_t^{(n)} + {z}_b}\right)^2 - \left(\dfrac{{z}_b}{1 - {z}_d + {z}_b}\right)^2 \right]\d{\Pi^{(n)}(\bm{z})} \Bigg\vert\\
		&\leq \Bigg\vert \int_{\mathcal{Z}\cap [0,\epsilon^2]} \dfrac{2 z_b \vert 1 - N_t^{(n)}\vert}{(1-\epsilon)^2N_t^{(n)}}\cdot  \dfrac{z_b}{1-\epsilon} \left(\dfrac{(1 + 2 \epsilon)}{ (1- \epsilon)N_t^{(n)}} + 1\right)\d{\Pi^{(n)}(\bm{z})} \Bigg\vert\\
		&\leq \dfrac{2 \vert 1 - N_t^{(n)}\vert}{(1-\epsilon)^3N_t^{(n)}} \left(\dfrac{(1 + 2 \epsilon)}{ (1- \epsilon)N_t^{(n)}} + 1\right) \sup_n \int_{\mathcal{Z} \cap [0, \epsilon]^2} z_b^2 \d \Pi^{(n)} (\bm z).
	\end{aligned}
\end{equation*}
The convergence of the second summand of inequality \eqref{eq:generatorconvergenceinequality} can be achieved in a similar fashion, adding and subtracting a term without the population size and using Taylor's expansion. The third summand of \eqref{eq:generatorconvergenceinequality} can be expanded similarly to $A_1^{(n, \epsilon)}$ above and is bounded because of the boundedness of the derivatives of $\Phi$. Taking limits, the last term vanishes as well. 
\end{proof}

\section{Lookdown construction} \label{sec:well-posedness}
This section provides a proof for \Cref{theo:well-posedness}, i.e.~the existence and uniqueness of the $(\bm\gamma, \Pi)$-Fleming-Viot process.
The main part will be the construction of a lookdown representation, from which uniqueness will carry over to the original process.
This is a special case of \cite[Section 4]{DK99}, see also \cite[Section 2]{alphabeta} for a similar construction. 
We will reframe the result in the language of the Markov Mapping Theorem from \cite{Kur98}, see also \cite{NK11,EK19}.

A lookdown process allows one to represent a measure-valued population process as a countable system of particles, labelled by some \emph{levels} in $\mathbb{N}$. 
The name lookdown comes from the fact that children typically look down on their parents in the sense that individuals with higher levels inherit the type of individuals with lower level. 
However, even though the dynamics of the labelled population can depend on the labels, care has to be taken as to keep the particles exchangeable.
This allows the measure-valued process to be retrieved from the lookdown process as the limiting empirical average (or DeFinetti measure).

If the individual-based prelimiting model can be used to define the lookdown process, the latter also contains the genealogy and can be used to study the corresponding coalescent.
In this case, the lookdown even provides a forward-in-time option to study the lines of descent.

\subsection{Construction as a $\Lambda$-Fleming-Viot in a random environment}\label{ssec:contruction_random_environment}
The following lookdown representation was first introduced in a general form in \cite[Section 3.3]{DK99}. 
Here, we follow the presentation of \cite{alphabeta}, where a special case has been treated.
In the following, we will write $(N,\rho)$ to denote a $(\bm\gamma,\Pi)$-Fleming-Viot process. 
More generally, we use the symbol $N$ for a population size and $\rho$ for a type distribution on $\mathbb{K}$. 
In this section, we consider a system of labelled particles as a sequence $\mathsf{k}\in \mathbb{K}^\mathbb{N}$ in the sense that $i$ is the label of the individual $\mathsf{k}(i)$.
We will describe a particle process $(\mathsf{k}_t)_{t\geq 0}$ such that $\mathsf{k}_t$ is conditionally independent with DeFinetti measure $\rho_t$.
Since this sequence does not contain any information on the actual \emph{size} of the population, we will additionally consider the population size process $N$.

As we noted in \Cref{eq:operator_for_N_rho}, the type distribution does not see the continuous births and deaths and evolves only through reproduction events.
If we denote by $\overline{\bm z} := \overline{\bm z}(N, \bm z) := \frac{z_b}{(1-z_d)N + z_b}$ the \emph{effective impact} of the event $\bm z$ at population size $N$, the event $\bm z$ affects the type distribution via 
\[
\rho_t = \Big(1 - \overline{\bm z}(N_{t-}, \bm z)\Big)\rho_{t-} + \overline{\bm z}(N_{t-}, \bm z)\delta_\kappa,
\]
where $\kappa \sim \rho_{t-}$.
To achieve this on the level of the lookdown, we will assign every individual $i$ independently a uniform variable $\mathsf{u}(i)\sim \mathrm{Unif}([0,1])$.
Since a proportion $\overline{\bm z}$ is to be replaced, we mark an individual $i$ whenever $\mathsf{u}(i) \leq \overline{\bm z}$.
Since we suppose the sequence $\mathsf{k}$ to be exchangeable, we may choose the parent to be the marked individual with the lowest level.
Note that this means that the parent is always killed and replaced by its offspring, but this does not impact an infinite population.
From here, we would like to simply replace all marked individuals by offspring with the type of the parent.
However, this would destroy the exchangeability of the particle system. 
Instead, we will \emph{insert} the offspring at the marked levels and shift all other individuals upwards accordingly, see \Cref{fig:lookdown_event}.

\newarray\typesBefore
\readarray{typesBefore}{purple&purple&green&green&purple&green}
\dataheight=1
\definecolor{green}{rgb}{1,0,0}
\definecolor{purple}{rgb}{0,0,1}
\begin{figure}
\centering
	\begin{tikzpicture}
			\node at (0, 8.5*0.8) {Label};
		\node at (0, 7*0.8) {$\vdots$};
		\node at (3, 8.5*0.8) {Types before event};
		\node at (3, 7*0.8) {$\vdots$};
		\foreach \i in {1,...,6} {
			\node at (0,\i*0.8) {\i};
			\checktypesBefore(\i)
			\trimspace\cachedata
			\draw[draw=\cachedata] (1,\i*0.8) -- 	(5,\i*0.8);
		}
		
		\foreach \i in {2,4,5} {
			\node at (5, \i*0.8) {$\bigtimes$};
			
			\checktypesBefore(2)
			\trimspace\cachedata
			\draw[draw=\cachedata] (7, \i*0.8) -- 	(11,\i*0.8);
			\draw[->] (5.2, 2*0.8) -- (6.8, \i*0.8);
		}
		
		\foreach \i in {1, 3, 6} {
			\checktypesBefore(\i)
			\trimspace\cachedata
			\draw[draw=\cachedata] (7, \i*0.8) -- (11,\i*0.8);
		}
		\draw[->, dashed] (5.2,1*0.8) -- (6.8,1*0.8);
		\draw[->, dashed] (5.2, 3*0.8) -- (6.8,3*0.8);
		\draw[->, dashed] (5.2, 4*0.8) -- (6.8, 6*0.8);
		\draw[->, dashed] (5.2, 5*0.8) -- (6.8, 7*0.8);
		\draw[->, dashed] (5.2,6*0.8) -- (6.8,8*0.8);
		\checktypesBefore(5)
		\trimspace\cachedata
		\draw[draw=\cachedata] (7, 7*0.8) -- (11, 7*0.8);
		\node at (9, 8.5*0.8) {Types after event};
		\node at (9, 8*0.8) {$\vdots$};
	\end{tikzpicture}
	\caption{Illustration of a reproduction event: during the event, the particles with labels $2,4,5$ are chosen to participate. Offspring of the parental type are inserted. All other levels have to shift upwards accordingly. Types are marked by colors, drawn arrows indicate birth, dashed arrows the necessary shift.}\label{fig:lookdown_event}
\end{figure}
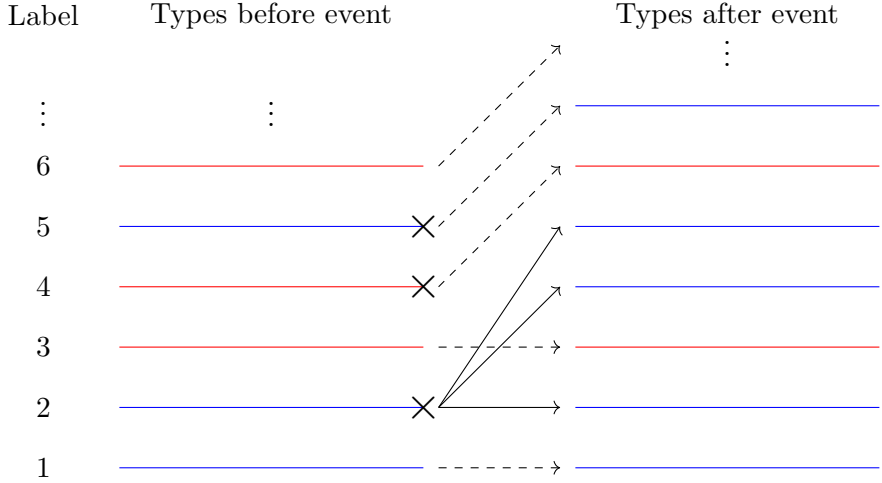

More rigorously, let $\xi$ be a Poisson point process on $[0,+\infty)\times \mathcal{Z}\times [0,1]^{\mathbb{N}}$ with mean intensity $\mathrm{d}s\otimes \Pi(\mathrm{d}\bm z)\otimes \mathrm{d}\mathsf{u}$, where $\mathrm{d}\mathsf{u}$ denotes the Lebesgue measure on $[0,1]^\mathbb{N}$.
Then, we construct the population size process as the strong solution to
\[
N_t = N_0 + \int_0^t \Big[ \gamma_b - \gamma_d N_s\Big] \d s + \int_{[0,t]\times \mathcal{Z}\times [0,1]^\mathbb{N}} \Big[ z_b - z_d N_{s-}\Big] \;\xi(\mathrm{d}s, \mathrm{d}\bm z, \mathrm{d}\mathsf{u}).
\]
The levels with label $n$ will not influence levels with label $m < n$, so that it is enough to describe the dynamics on the space $\mathbb{K}^n$ of $n$ levels, $n\in\mathbb{N}$.
Enumerate the points of $\xi = \big\{ (t_i, \bm z_i, \mathsf{u}_i) \big\}_i$ and write $\overline{\bm z}_i := \overline{\bm z}(N_{t_i-}, \bm z_i)$ for the corresponding effective impact.
For a set of levels $J\subseteq \lint 1, n\rint := \{1, \dots, n\}$ with $\vert J\vert\geq 2$, we denote by $L_J^n(t)$ the number of times that exactly these levels have been chosen in an event before time $t$:
\[
L_J^n(t) := \sum_{i:t_i \leq t} \prod_{j\in J} \mathds{1}_{\mathsf{u}_i(j)\leq \overline{\bm z}_i} \prod_{j\not\in J} \mathds{1}_{\mathsf{u}_i(j) > \overline{\bm z}_i}.
\]
For $\vert J\vert \leq 1$, we set $L_J^n \equiv 0$ as an event has no effect if at most one level is chosen to participate.
Since
\[
\mathbb{E}\Big[ L_J^n(t)\;\vert\; \{ (t_i, \bm z_i)\}_i\Big] = \sum_{i: t_i \leq t} \overline{\bm z}_i^{\vert J\vert}(1- \overline{\bm z}_i)^{n - \vert J\vert} \leq \sum_{i: t_i\leq t} \overline{\bm z}_i,
\]
\Cref{lem:finiteness_jumps_lookdown} below ensures that the processes $L_J^n$ a.s.~jump only finitely often in any compact time interval.
At a jump time of $L_J^n$, the vector $\mathsf{k}\in \mathbb{K}^n$ is replaced by $\theta_J^n(\mathsf{k})$, in which copies of $\mathsf{k}(\min J)$ are inserted at the indices in $J\setminus \{\min J\}$ and the last $\vert J\vert - 1$ entries are discarded to obtain a vector of length $n$:
\[
\theta_J^n(\mathsf{k})(i) = \begin{cases}
\mathsf{k}(i) &\text{ if } i < \min J\\
\mathsf{k}(\min J) & \text{ if } i\in J\\
\mathsf{k}\big(i - \vert J\cap \lint 1, i\rint\vert + 1\big) & \text{ otherwise}
\end{cases}.
\]
We set $\theta_J(\mathsf{k}) = \mathsf{k}$ for $\vert J\vert \leq 1$.

\begin{lem}\label{lem:finiteness_jumps_lookdown}
The sum $\sum_{i:t_i\leq t} \overline{\bm z}_i$ is a.s.~finite for every $t\geq 0$ .
\end{lem}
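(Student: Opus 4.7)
The plan is to decompose the sum according to the size of $z_d^i$, and to bound each piece using the subordinator condition \eqref{eq:subordinator_condition} together with the strict positivity of $N$ on compact time intervals from \Cref{theo:well-posedness_total_population}. Concretely, I would split
\[
\sum_{i:t_i \leq t} \overline{\bm z}_i = \sum_{i:t_i \leq t,\ z_d^i \geq 1/2} \overline{\bm z}_i \;+\; \sum_{i:t_i\leq t,\ z_d^i < 1/2} \overline{\bm z}_i
\]
and argue that each of the two contributions is a.s.\ finite.

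For the first sum, I would simply use $\overline{\bm z}_i \leq 1$ together with the fact that the set $\{z_d \geq 1/2\}\times(0,\infty)$ has finite $\Pi$-mass: by Markov's inequality,
\[
\Pi\big(\{z_d \geq 1/2\}\times (0,\infty)\big) \leq 2\int_\mathcal{Z} z_d \dshort{\Pi(\bm z)} < +\infty,
\]
so the number of points of $\xi$ falling in $[0,t]\times\{z_d \geq 1/2\}\times [0,1]^\mathbb{N}$ is almost surely finite.

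For the second sum, when $z_d^i < 1/2$ I would use the explicit formula to bound
\[
\overline{\bm z}_i = \frac{z_b^i}{(1-z_d^i)N_{t_i-} + z_b^i} \leq \frac{z_b^i}{(1-z_d^i)N_{t_i-}} \leq \frac{2\,z_b^i}{N_{t_i-}}.
\]
On the event $\{\tau_\epsilon > t\}$ the denominator is bounded below by $\epsilon$, so on this event the second sum is majorised by $\frac{2}{\epsilon}\sum_{i:t_i\leq t} z_b^i$, which has expectation $\frac{2t}{\epsilon}\int_\mathcal{Z} z_b \dshort{\Pi(\bm z)} < +\infty$ by the subordinator condition, and is thus a.s.\ finite. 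An appeal to \eqref{eq:N_doesnt_hit_zero}, which gives $\mathbb{P}(\tau_\epsilon > t) \to 1$ as $\epsilon\downarrow 0$, concludes the argument.

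There is no real obstacle here; the only subtle point is the splitting at $z_d = 1/2$, which is needed to keep the factor $(1 - z_d^i)^{-1}$ in the bound on $\overline{\bm z}_i$ under control, while simultaneously exploiting that the portion with $z_d$ bounded away from $0$ is essentially governed by finitely many events in any finite time horizon.
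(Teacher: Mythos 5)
Your argument is correct and is essentially the paper's own proof: both localise via the stopping time $\tau_\epsilon$ from \Cref{theo:well-posedness_total_population}, split at $z_d = 1/2$ to bound $\overline{\bm z}_i$ by $\mathds{1}_{\{z_d^i\geq 1/2\}} + 2z_b^i/\epsilon$, invoke the subordinator condition \eqref{eq:subordinator_condition} for the expectation bound, and conclude with \eqref{eq:N_doesnt_hit_zero}. The only cosmetic difference is that you split the sum into two pieces before taking expectations, while the paper bounds the integrand in one step.
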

\begin{proof}
As in \Cref{theo:well-posedness_total_population}, let
\[
\tau_\epsilon := \inf\{t\geq 0\;:\; N_t\not\in U_\epsilon\text{ or } N_{t-}\not\in U_\epsilon\},\quad\text{ where } U_\epsilon = (\epsilon, 1/\epsilon).
\]
Then, for any $\epsilon > 0$
\begin{align*}
		\mathbb{E}\left[\sum_{i : t_i \leq t\wedge\tau_\epsilon } \overline{\bm{{z}}}_i \right] &\leq \mathbb{E}\left[\mathds{1}_{N_0\geq \epsilon} \int_{[0,t] \times \mathcal{Z}\times [0,1]^n} \frac{{z}_b}{(1 - {z}_d) \epsilon + {z}_b} \;\xi(\mathrm{d}t,\mathrm{d}\bm{z},\mathrm{d}u) \right]\\
		&\leq t\int_\mathcal{Z} \dfrac{{z}_b}{(1-{z}_d)\epsilon + {z}_b}\d\Pi(\bm{z})\\
		&\leq t\cdot \int_\mathcal{Z} \mathds{1}_{\{{z}_d\geq 1/2\}} + \dfrac{2{z}_b}{\epsilon}\d\Pi(\bm{z}) < +\infty.
	\end{align*}
Consequently,
\begin{align*}
\mathbb{P}\left(\sum_{i : t_i < t} \overline{\bm{z}_i} = +\infty \right) &\leq \mathbb{P}\left(\bigcap_{\epsilon \downarrow 0} \{\tau_\epsilon < t\}\right) = \lim_{\epsilon\downarrow 0} \mathbb{P}(\tau_\epsilon < t) = 0. \qedhere
\end{align*}
\end{proof}

The proof that this is indeed the correct lookdown process for the $(\bm\gamma,\Pi)$-Fleming-Viot process will be postponed to \Cref{ssec:well_posedness}.

\subsection{The Markov Mapping Theorem}

Before stating the theorem, we will give a short heuristic guide. 
Consider the following situation: on a base space $B$, we have a solution $Y$ to a given martingale problem $A_B$.
However, part of the stochastic information is not explicitly observed; one could think of the driving noise of an SDE or the level structure of a lookdown.
The Markov Mapping Theorem will allow us in specific cases to retrieve this additional information.
More precisely, we want to find a process $X$ on an \emph{extended} space $E$ such that it ``projects" to the initial process: $Y = \Gamma(X)$ in distribution for a measurable function $\Gamma: E\rightarrow B$.
This will be possible if the generator $A_B$ can be viewed as the averaged version $\alpha A_E$ of a generator $A_E$ of $E$, where the averaging $\alpha$ has to behave well w.r.t.~the projection $\Gamma$.
In our case, $B$ will represent the measure-valued population distribution, whereas $E$ will denote the space of labelled ``individuals" in a loose sense.
The projection $\Gamma$ will retrieve the distribution as the DeFinetti measure of the labelled population.

The following theorem will make these ideas rigorous. We will use the notation for multivalued generators from \cite{EK86} and the terminology of \cite{EK19}. 
Let $E$ be a Polish space, $B(E)$ be the space of bounded measurable functions and $C_b(E)$ be the space of bounded continuous functions.

\begin{theo}[Markov Mapping Theorem, {\cite[Corollary 3.2]{NK11}}]\label{theo:MMT}
Let $E$ and $B$ be separable Polish spaces. Let $A_E\subseteq C_b(E) \times B(E)$ and $\psi\in C(E)$ with $\psi \geq 1$. 
Suppose that for every $f\in \mathcal{D}(A_E)$, there exists a $c_f\geq 0$ such that
\begin{equation} \label{eq:mmtpsi}
\vert g(x)\vert \leq c_f \psi(x)\qquad\text{ for all }x\in E
\end{equation}
for all $g\in B(E)$ such that $(f,g)\in A_E$. 
Define $A_E^0f := \frac{A_E f}{\psi}$.

Suppose that $A_E^0$ is a countably determined pre-generator, and suppose that $\mathcal{D}(A_E^0) = \mathcal{D}(A_E)$ is closed under multiplication and is separating. 
Let $\Gamma: E\rightarrow B$ measurable, and let $\alpha$ be a transition kernel from $B$ to $E$ satisfying $\alpha\big(y, \Gamma^{-1}(y)\big) = 1$ for all $y\in B$. 
Furthermore, assume that $\alpha\psi < +\infty$ and define
\[
A_B := \left\{ \big(\alpha f, \alpha (A_Eg)\big)\;\vert\; (f,g)\in A_E\right\}.
\]
Let $\mu\in \mathcal{M}_1(B)$ and $\nu := \int_B \alpha(y,\cdot)\d\mu(y)$. 
Suppose $\tilde{Y}$ is a \emph{{càdlàg}} solution to the martingale problem for $(A_B,\mu)$ without fixed point of discontinuity.
\begin{enumerate}
	\item If $\tilde{Y}$ satisfies $\int_0^t \alpha\psi(\tilde{Y}_t) < +\infty$ a.s.~for all $t\geq 0$, there exists a solution $X$ to the martingale problem for $(A_E,\nu)$. 
	If $Y := \gamma\circ X$ is \emph{càdlàg}, then $Y$ and $\tilde{Y}$ have the same distribution on $D_B([0,+\infty))$.
	\item For any $t\geq 0$, one has $\mathbb{E}[f(X_t)\;\vert\; \mathcal{F}_t^Y] = \alpha f(Y_t)$.
	\item If uniqueness holds for the martingale problem for $(A_E, \nu)$, then uniqueness holds for the martingale problem for $(A_B, \mu)$.
\end{enumerate}
\end{theo}

\subsection{Well-posedness of the $($\texorpdfstring{$\bm{\gamma}$}{\gamma}$,\Pi)$-Fleming-Viot martingale problem}\label{ssec:well_posedness}

In this section, we link the lookdown process from \Cref{ssec:contruction_random_environment} back to the $(\bm\gamma,\Pi)$-Fleming-Viot process.
Well-Posedness of the martingale problem then follows from the Markov Mapping Theorem.

Here, the base space is given by $B := (0,+\infty)\times \mathcal{M}_1(\mathbb{K})$.
The extended space $E := (0,+\infty)\times \mathbb{K}^\mathbb{N}$ is the space of labelled particles.
We set the projection $\Gamma : E\rightarrow B$ to be the empirical average
\[
\Gamma(N, \mathsf{k}) := \left( N, \lim_{n\to+\infty} \dfrac{1}{n}\sum_{i=1}^n \delta_{\mathsf{k}(i)}\right)
\]
whenever it exists and set $\Gamma(N,\mathsf{k}) := (N,\rho_0)$ otherwise for some arbitrary, but fixed $\rho_0\in\mathcal{M}_1(\mathbb{K})$.
Conversely, we define the kernel $\alpha$ from ${B}$ to ${E}$ by
\[
\alpha(N, \rho; \cdot) := \delta_N \otimes \rho^{\otimes \mathbb{N}}.
\]

Next, we identify the martingale problem for the lookdown process.
Define the space $\mathcal{D}$ of test functions to contain $h:E\rightarrow\mathbb{R}$ of the form
\begin{equation}\label{eq:test_function_lookdown}
h(N,\mathsf{k}) = f(N)\prod_{i=1}^n g_i\big(\mathsf{k}(i)\big)
\end{equation}
for $f\in C_b^1(\mathbb{R})$, $n\in\mathbb{N}$, and $g_1,\dots,g_n$ continuous with $0\leq g_i\leq 1$.
Then, the lookdown process is solution to the martingale problem for
\begin{align*}
Ah(N,\mathsf{k}) &= \big( \gamma_b - \gamma_d N\big) \dfrac{f'(N)}{f(N)}h(N, \mathsf{k})\\
&\qquad +\int_\mathcal{Z} \left(\sum_{J\subseteq \lint 1, n\rint} \overline{\bm{z}}^{\vert J\vert}\left(1-\overline{\bm{z}}\right)^{n-\vert J\vert} h\Big( (1-{z}_d)N + {z}_b, \theta_J(\mathsf{k})\Big)  - h(N,\mathsf{k})\right)\d{\Pi(\bm{z})},
\end{align*}
defined on $\mathcal{D}$, where $h$ is assumed to be given by \eqref{eq:test_function_lookdown}.

To compute the averaged generator $\alpha A$, we first note that
\[
\alpha h(N, \rho) = f(N) \prod_{i=1}^n \overline{g}_i,
\]
where $\overline{g}_i := \int_\mathbb{K} g_i(\kappa)\d{\rho(\kappa)}$. 
If we apply the kernel to the lookdown generator $A$, this gives
\begingroup
\allowdisplaybreaks
\begin{align*}
&\alpha (A h)(N,\rho) \\
&= (\gamma_b - N\gamma_d)\dfrac{f'(N)}{f(N)}\alpha h(N,\rho)\\
&\begin{aligned}
&\qquad + \int_\mathcal{Z} \Bigg[ f\Big( (1-{z}_d)N + {z}_b\Big)\\
&\qquad\qquad\qquad\qquad \times \sum_{J\subseteq \lint 1,n\rint} \overline{\bm{z}}^{\vert J\vert}\left(1 - \overline{\bm{z}}\right)^{n - \vert J\vert}  \left(\int_\mathbb{K} \prod_{i\in J} g_i(\kappa^*)\d{\rho(\kappa^*)}\right)\cdot \left(\prod_{i\not\in J} \overline{g}_i\right) \\
&\hspace{11cm} \vphantom{\sum_{J\subseteq \lint 1,n\rint} \overline{\bm{z}}^{\vert J\vert}\left(1 - \overline{\bm{z}}\right)^{n - \vert J\vert}}- \alpha h(N,\rho)\Bigg] \d{\Pi(\bm{z})}
\end{aligned}\\
&= (\gamma_b - N\gamma_d)\dfrac{f'(N)}{f(N)}\alpha h(N,\rho)\\
&\qquad + \int_\mathcal{Z}\int_\mathbb{K} \left[ f\Big(1-{z}_d)N + {z}_b\Big) \prod_{i=1}^n\Big( (1-\overline{\bm{z}})\overline{g}_i + \overline{\bm{z}}g_i(\kappa^*)\Big) - \alpha h(N,\rho) \right]\d{\rho(\kappa^*)}\d{\Pi(\bm{z})}\\
&= (\gamma_b - N\gamma_d)\dfrac{f'(N)}{f(N)}\alpha h(N,\rho)\\
&\qquad + \int_\mathcal{Z} \int_\mathbb{K}\alpha h\left( (1-{z}_d)N + {z}_b, \left(1 - \overline{\bm{z}}\right)\rho + \overline{\bm{z}}\delta_{\kappa^*}\right)\d{\rho(\kappa^*)}\d{\Pi(\bm{z})},
\end{align*}
\endgroup
which is the generator of the $(\bm\gamma,\Pi)$-Fleming-Viot martingale problem, verifying the basic assumption of the Markov Mapping Theorem \ref{theo:MMT}.

We will divide the prove of \Cref{theo:well-posedness} into three parts: we first prove that the martingale problem is regular, then existence and uniqueness.

\begin{lem}\label{lem:regularity}
Consider a $(\bm\gamma,\Pi)$-Fleming-Viot process. 
Then:
\begin{enumerate}
	\item there exists a \emph{càdlàg} modification of the process;
	\item this modification is quasi-left continuous and thus has no fixed point of discontinuity.
\end{enumerate}
\end{lem}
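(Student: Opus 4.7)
For part (1), I would invoke the standard regularization result for martingale problems. For each $\Phi\in\mathcal{D}(\mathcal{L}^{\bm\gamma,\Pi})$ the process $M^\Phi_t := \Phi(\sigma_t) - \int_0^t \mathcal{L}^{\bm\gamma,\Pi}\Phi(\sigma_s)\d s$ is a martingale and therefore admits a càdlàg modification by Doob's regularization theorem; since the Lebesgue integral is continuous in $t$, $\Phi(\sigma_t)$ itself admits a càdlàg modification. The domain $\mathcal{D}(\mathcal{L}^{\bm\gamma,\Pi})$ contains a countable family of bounded Fréchet-differentiable test functions separating points of $\mathcal{M}_F(\mathbb{K})$, so a simultaneous modification argument in the spirit of Theorem 4.3.6 of Ethier-Kurtz produces a càdlàg modification of $\sigma$ itself.

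For part (2), my approach is to show that the predictable compensator of the jump measure of $\sigma$ is absolutely continuous in the time variable. Reading off the jump part of $\mathcal{L}^{\bm\gamma,\Pi}$, the jumps of $\sigma$ have predictable compensator of the form $\d t\otimes\Pi(\d\bm z)\otimes \frac{\sigma_{t-}(\d\kappa)}{\sigma_{t-}(\mathbb{K})}$, which is absolutely continuous in $t$ and $\sigma$-finite in $(\bm z,\kappa)$ by the subordinator condition \eqref{eq:subordinator_condition}. By standard semimartingale theory (cf.~Jacod-Shiryaev, Chapter II), all jumps then occur at totally inaccessible stopping times. Hence $\sigma_{T-} = \sigma_T$ almost surely on $\{T<\infty\}$ for every predictable stopping time $T$, which is precisely quasi-left continuity; specializing to constant $T\equiv t_0$ yields the absence of fixed points of discontinuity.

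The main subtlety to navigate is circularity: uniqueness of the martingale problem is only established later in \Cref{ssec:well_posedness} via the Markov Mapping Theorem, which itself requires \Cref{lem:regularity} as an input. Hence one cannot argue by simply transporting properties of the lookdown-constructed càdlàg solution of \Cref{ssec:contruction_random_environment} to an arbitrary solution. The semimartingale framework above is the natural workaround, as it applies to every individual solution of the martingale problem; the identification of the compensator from the generator is routine thanks to the subordinator condition, and is the only mildly technical step of the argument.
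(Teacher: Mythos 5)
Your argument is essentially correct, and for part (1) it is the paper's argument: the paper also deduces the càdlàg modification from \cite[Theorem 4.3.6]{EK86}, whose proof is exactly the Doob-regularization-on-a-countable-separating-family mechanism you describe. The one hypothesis you gloss over is the compact containment condition, which is a genuine hypothesis of that theorem and is genuinely needed here: $\mathcal{M}_F(\mathbb{K})$ is not compact in the total-mass direction, so without controlling $\sup_{t\le T}\sigma_t(\mathbb{K})$ the regularization only yields paths in a compactification. The paper supplies this via \Cref{cor:compact_containment_N} applied to the population size process $N_t=\sigma_t(\mathbb{K})$ (bounded mass plus compactness of $\mathbb{K}$ then gives relative compactness in $\mathcal{M}_F(\mathbb{K})$). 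This is an easy fix, but it should be stated.

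For part (2) you take a genuinely different route. The paper simply cites \cite[Theorem 4.3.12]{EK86}, which gives quasi-left continuity directly for any càdlàg solution of a martingale problem with a separating domain — no identification of semimartingale characteristics required. Your route (compensator of the jump measure absolutely continuous in $t$, hence all jump times totally inaccessible, hence $\sigma_{T-}=\sigma_T$ at predictable times) is valid and arguably more informative about \emph{why} quasi-left continuity holds, and you are right that it applies to an arbitrary solution and thus avoids the circularity with the lookdown construction of \Cref{ssec:contruction_random_environment} — a point the paper's citation also sidesteps, since \cite[Theorem 4.3.12]{EK86} likewise applies to every individual solution. The caveat is that the step you call routine — extracting the predictable compensator $\d t\otimes\Pi(\d\bm z)\otimes\sigma_{t-}(\d\kappa)/\sigma_{t-}(\mathbb{K})$ of a \emph{measure-valued} jump process from the martingale problem, and then transferring total inaccessibility from the real-valued semimartingales $\langle f,\sigma_t\rangle$ (for a countable separating family of $f$) back to $\sigma$ itself — is where most of the actual work would sit if written out; the subordinator condition \eqref{eq:subordinator_condition} and the fact that $N$ stays away from $0$ are both needed to make the compensator $\sigma$-finite and well defined. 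Net: your approach is correct but longer to make rigorous; the paper's two citations buy the same conclusion with the compact containment corollary as the only input to verify.
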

\begin{proof}[Proof of \Cref{lem:regularity}]
Note that ${B} = (0,+\infty)\times \mathcal{M}_1(\mathbb{K})$ is separable and that $C_b^1({B})$ is separating and contains a countable set that separates points. If $(N,\rho)$ is a $(\bm\gamma, \Pi)$-Fleming-Viot, then $N$ is the population size process studied in \Cref{sec:populationsize} and thus
\[
\mathbb{P}\Big( (N_t,\rho_t)\in \Gamma_{T,\epsilon}\times \mathcal{M}_1(\mathbb{K})\text{ for all }t\in [0,T]\Big) \geq 1 - \epsilon
\]
for all $T,\epsilon > 0$ and a compact $\Gamma_{T,\epsilon}$ given by \Cref{cor:compact_containment_N}.
Hence, \cite[Theorem 4.3.6]{EK86} yields the existence of a \emph{càdlàg} modification.
Finally, \cite[Theorem 4.3.12]{EK86} ensures that the \emph{càdlàg} modification is quasi-left continuous, and thus has no fixed point of discontinuity.
\end{proof}

Since any modification of a solution to the martingale problem is a solution itself, we may and will assume in the following that any solution is \emph{càdlàg}.

\begin{lem}\label{lem:existence}
For any given initial condition $(N_0, \rho_0)\in S$, there exists a $(\bm\gamma,\Pi)$-Fleming-Viot process.
\end{lem}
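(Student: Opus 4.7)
The plan is to construct the $(\bm\gamma,\Pi)$-Fleming-Viot process as the image under $\Gamma$ of the lookdown process of \Cref{ssec:contruction_random_environment}. Given the initial condition, I would first sample the extended initial condition $(N_0,\mathsf{k}_0)$ from the kernel $\alpha(N_0,\rho_0;\cdot)=\delta_{N_0}\otimes\rho_0^{\otimes\mathbb{N}}$, independently of the driving Poisson point process $\xi$. The process $(N_t,\mathsf{k}_t)_{t\geq 0}$ is then built level-by-level: the population size $N$ is the strong solution to the SDE analysed in \Cref{theo:well-posedness_total_population}, while for each $n\in\mathbb{N}$ the finite-dimensional projection $\big(\mathsf{k}_t(1),\dots,\mathsf{k}_t(n)\big)$ evolves through the finitely many insertion maps $\theta_J^n$ driven by the counting processes $L_J^n$. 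Thanks to \Cref{lem:finiteness_jumps_lookdown}, each $L_J^n$ jumps only finitely often on compact time intervals, so this yields a well-defined càdlàg $E$-valued process.

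Next, I would verify that the $\mathbb{K}^\mathbb{N}$-component remains exchangeable at every time $t\geq 0$. This follows because $\mathsf{k}_0$ is i.i.d.\ under $\rho_0$ and the dynamics are symmetric in the labels: participation in an event is decided by the i.i.d.\ marks $\mathsf{u}_i$, and the insertion maps $\theta_J^n$ depend only on the chosen set $J$ in a label-order preserving way. De Finetti's theorem then produces a random measure $\rho_t\in\mathcal{M}_1(\mathbb{K})$ as the almost sure limit of the empirical averages $\tfrac{1}{n}\sum_{i=1}^n\delta_{\mathsf{k}_t(i)}$; joint measurability in $t$ can be obtained by working on a countable dense subset of times together with càdlàg regularisation. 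Setting $(N_t,\rho_t):=\Gamma(N_t,\mathsf{k}_t)$ gives a càdlàg $B$-valued process with initial value $(N_0,\rho_0)$.

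Finally, to identify this projection as a solution of the $(\bm\gamma,\Pi)$-Fleming-Viot martingale problem, I would test against the functions $\Phi=\alpha h$ with $h$ of the form \eqref{eq:test_function_lookdown}. The lookdown process solves the martingale problem for the generator $A$ computed in \Cref{ssec:well_posedness}, and the identity $\alpha(Ah)=\mathcal{L}^{\bm\gamma,\Pi}(\alpha h)$ is already established there. Taking conditional expectation of the lookdown martingale with respect to the filtration $\mathcal{F}^Y$ generated by $Y:=(N,\rho)$ and using conditional exchangeability, i.e.\ $\mathbb{E}[h(N_t,\mathsf{k}_t)\mid\mathcal{F}^Y_t]=\alpha h(N_t,\rho_t)$, turns it into an $\mathcal{F}^Y$-martingale with the correct compensator. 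Since the averaged test functions form an algebra of polynomials in $\rho$ that separates points of $B$, a standard approximation argument extends the martingale property to all $\Phi\in C_b^1(B)$.

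The main obstacle will be rigorously justifying the conditional exchangeability identity in a way that is uniform enough in time to commute conditional expectation with the time integral in the martingale. This is handled by the pointwise bound $|Ah(N,\mathsf{k})|\leq C(1+N)$ implicit in the closedness argument of \Cref{sec:closedness}, combined with the compact containment of \Cref{cor:compact_containment_N} and dominated convergence controlled by \Cref{lem:finiteness_jumps_lookdown}.
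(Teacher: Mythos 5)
Your route is genuinely different from the paper's. The paper proves existence without touching the lookdown at all: for finite $\Pi$ the process is built directly as a piecewise deterministic Markov process, and for general $\Pi$ one truncates the small jumps, $\Pi^{(n)} := \mathds{1}_{\Vert\bm z\Vert>1/n}\Pi$, adjusts $\bm\gamma^{(n)}$ to stay in $\mathfrak{C}$, and invokes \Cref{theo:closedness} to pass to the limit. The lookdown and the Markov Mapping Theorem are then used only for \emph{uniqueness}, and crucially the MMT takes an already existing solution $\tilde Y$ of the $B$-valued martingale problem as input. You instead propose the classical Donnelly--Kurtz route: build the particle system first and obtain the measure-valued process as its de~Finetti projection. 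This is viable in principle and would make the lookdown self-contained, but it forces you to prove from scratch the two facts that the paper gets for free from the MMT \emph{after} existence is known.

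The gap is precisely there. First, your justification of exchangeability --- ``the dynamics are symmetric in the labels'' --- is false as stated: the maps $\theta_J^n$ single out $\min J$ as the parent and shift the remaining levels upward, so the dynamics are manifestly label-asymmetric. The correct statement (a single event applied to a conditionally i.i.d.\ sequence with directing measure $\rho$ produces, conditionally on the environment, an i.i.d.\ sequence with directing measure $(1-\overline{\bm z})\rho+\overline{\bm z}\delta_\kappa$, $\kappa\sim\rho$) is true but requires the explicit computation that the non-participating positions receive distinct original labels other than $\min J$; this is the actual content of the lookdown construction and cannot be waved through. Second, exchangeability at fixed times is not enough to project the martingale problem: you need the filtering identity $\mathbb{E}[g(X_s)\mid\mathcal{F}^Y_t]=\alpha g(Y_s)$ for $s\le t$ (or a martingale statement with respect to the environment filtration $\sigma(\xi_0,\rho_0)$ to which $Y$ is adapted) in order to identify the compensator $\int_0^t\alpha(Ah)(Y_s)\dshort{s}$. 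You cannot cite \Cref{corol:exchangeability} for this, since that corollary is itself a consequence of the MMT applied to an existing solution. Finally, the bound you quote, $\vert Ah(N,\mathsf{k})\vert\le C(1+N)$, is not correct: the actual estimate \eqref{eq:lookdown_psi_bound} carries an additional $C2^n(1+N^{-1})$ term coming from $\overline{\bm z}^2\le 2z_bN^{-1}$ on $\{z_d\ge 1/2\}$, which is why the localization at $\tau_\epsilon$ and \eqref{eq:N_doesnt_hit_zero} are needed to make $\int_0^t Ah(X_s)\dshort{s}$ integrable. None of these obstacles is fatal, but as written the proposal asserts rather than proves the two steps that constitute the real work.
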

\begin{proof} %[Proof of Lemma \Cref{lem:existence}]
%There are two approaches to prove existence.
%\begin{description}
%	\item[By compactification:] 
%	The space $S$ is locally compact and separable. Furthermore, $C_b^1(\mathcal{B})$ is dense in the set $C_0(S)$ of bounded continuous functions vanishing at infinity in the sense of locally compact spaces. As a pure jump operator with drift, one verifies that $A$ satisfies the Positive Maximum Principle. Hence, \cite[Theorem 4.5.4]{EK86} guarantees the existence of a \emph{càdlàg} solution $(N,\rho)$ taking values in the one-point compactification of $S$. Since $\mathcal{M}_1(\mathbb{K})$ is compact, the one-point compactifiaction of $S$ is given by $\overline{(0,+\infty)}\times \mathcal{M}_1(\mathbb{K})$, where $\overline{(0,+\infty)}$ denotes the one-point compactification of $(0,+\infty)$. To conclude, it suffices to prove that $N_t\in (0,+\infty)$ for all $t\geq 0$, which is part of \Cref{theo:well-posedness_total_population}.
%	\item[By approximation:] 
	Existence is immediate as soon as the jump measure $\Pi$ is finite, as a solution may be constructed as a Piecewise Deterministic Markov Process. For a general characteristic $(\bm\gamma, \Pi)$, set $\Pi^{(n)}(\bm z) := \mathds{1}_{\Vert \bm z\Vert > 1/ n}\Pi(\bm z)$ and choose $\bm\gamma^{(n)}$ such that $(\bm\gamma^{(n)},\Pi^{(n)})\in\mathfrak{C}$ and $\bm\gamma^{(n)} \to \bm\gamma$. Then, $(\bm\gamma^{(n)},\Pi^{(n)})\to (\bm\gamma,\Pi)$ in the sense of characteristics and we may apply \Cref{theo:closedness} to obtain a $(\bm\gamma,\Pi)$-Fleming-Viot process as the limit of $(\bm\gamma^{(n)},\Pi^{(n)})$-Fleming-Viot processes, see also \Cref{sec:closedness}.
	\qedhere
%\end{description}
\end{proof}

By \cite[Theorems 4.4.2 and 4.4.6]{EK86} the strong Markov property follows for both the lookdown and projected process from the next two lemmas.
\begin{lem}[{see \emph{e.g.}~\cite[Section 4]{DK99}}]\label{lem:uniqueness-lookdown}
The martingale problem for $A$ is well-posed.
In particular, the unique solution is given by the construction in \Cref{ssec:contruction_random_environment}.
\end{lem}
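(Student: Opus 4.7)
The plan is to follow the lookdown strategy of \cite{DK99}: existence is already provided by the construction of \Cref{ssec:contruction_random_environment}, while uniqueness is reduced, via projection onto the first $n$ levels, to the uniqueness of a pure jump Markov process whose total jump rate along the realized noise is finite.

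For existence, I would take the process $(N,\mathsf{k})$ built from the Poisson point process $\xi$ in \Cref{ssec:contruction_random_environment} and verify that it satisfies the martingale problem for $A$ on test functions $h$ of the product form \eqref{eq:test_function_lookdown}. Dynkin's formula applied to the SDE governing $N$ (\Cref{theo:well-posedness_total_population}) together with the conditional Binomial selection of participating levels at each atom of $\xi$ reproduces exactly $Ah$, the integrability of the compensator being guaranteed by the subordinator condition \eqref{eq:subordinator_condition} and by \Cref{lem:finiteness_jumps_lookdown}.

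For uniqueness, I would argue in three steps. First, given any solution $(N,\mathsf{k})$ to the martingale problem for $A$, testing against $h(N,\mathsf{k}) = f(N)$ makes the shift operators $\theta_J$ act trivially, so $N$ solves the martingale problem for \eqref{eq:operator_for_N}; the law of $N$ is thus uniquely determined by \Cref{theo:well-posedness_total_population}, and we may realise $N$ jointly with its driving subordinator $\bm L$ of characteristic $(\bm\gamma,\bm 0,\Pi)$. Second, for each $n \geq 1$ the truncation $\mathsf{k}^{(n)} := (\mathsf{k}(1),\dots,\mathsf{k}(n))$ is, conditionally on $\bm L$, a time-inhomogeneous pure jump process on $\mathbb{K}^n$ whose jumps occur only at the atoms $(t_i,\bm z_i)$ of $\xi$, at each of which it jumps to $\theta_J^n(\mathsf{k}^{(n)})$ with conditional probability $\overline{\bm z}_i^{|J|}(1-\overline{\bm z}_i)^{n-|J|}$ for $J\subseteq \lint 1,n\rint$ with $|J|\geq 2$. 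Since by \Cref{lem:finiteness_jumps_lookdown} only finitely many such effective jumps hit $\mathsf{k}^{(n)}$ on any compact interval almost surely, the conditional law of $\mathsf{k}^{(n)}$ given $(N_0,\mathsf{k}^{(n)}_0)$ and $\bm L$ is uniquely determined. Third, the family of finite-dimensional projections is consistent in $n$ (inserting levels above the top of $\mathsf{k}^{(n)}$ does not alter the dynamics below), so Kolmogorov extension yields uniqueness of the joint law of $(N,\mathsf{k})$ on $E$, which must then coincide with the distribution of the explicit construction.

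The main obstacle is the possibility that $\Pi$ has infinite mass near the origin, in which case infinitely many events of microscopic effective impact could, a priori, act on the first $n$ levels in a finite time. This is precisely what \Cref{lem:finiteness_jumps_lookdown} rules out, using that $N$ stays bounded away from $0$ on finite horizons via \eqref{eq:N_doesnt_hit_zero} and the subordinator condition \eqref{eq:subordinator_condition} to bound $\sum_{t_i \leq t} \overline{\bm z}_i$. Everything else is a routine pure-jump-process argument, which is why \cite[Section 4]{DK99} can be invoked directly once this integrability input is in place.
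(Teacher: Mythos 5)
Your uniqueness argument takes a genuinely different route from the paper's, and it has a gap at its central step. The paper localizes: with $\tau_\epsilon := \inf\{t\geq 0 : N_t \notin (\epsilon,1/\epsilon) \text{ or } N_{t-}\notin(\epsilon,1/\epsilon)\}$ from \Cref{theo:well-posedness_total_population}, the stopped martingale problem $(A,\tau_\epsilon)$ has \emph{bounded} total jump rate --- on $\{N\geq\epsilon\}$ one has $\overline{\bm z}^{2}\leq \overline{\bm z}\leq \mathds{1}_{\{z_d\geq 1/2\}}+2z_b/\epsilon$, which is $\Pi$-integrable by \eqref{eq:subordinator_condition} --- so classical uniqueness for bounded-rate pure jump martingale problems applies, and the conclusion is extended to $A$ via \cite[Theorem 4.6.2]{EK86} together with $\lim_{\epsilon\downarrow 0}\mathbb{P}(\tau_\epsilon\leq t)=0$.

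Your step 2 instead asserts that, for an \emph{arbitrary} solution $(N,\mathsf{k})$ of the martingale problem, the truncation $\mathsf{k}^{(n)}$ is, conditionally on $\bm L$, a pure jump process jumping only at ``the atoms $(t_i,\bm z_i)$ of $\xi$'' with the stated binomial probabilities. But an arbitrary solution comes with no Poisson point process $\xi$ and no subordinator $\bm L$; these are features of the explicit construction, and proving that every solution carries this quenched structure is essentially the statement to be established, so as written the argument is circular. Two concrete obstructions: (i) the equivalence of martingale problems and SDEs lets you couple $N$ with \emph{some} driving $\bm L$ on an enlarged space, but it says nothing about the conditional law of $\mathsf{k}$ given that $\bm L$; (ii) the reproduction times cannot in general be read off from $N$, since events with $z_b = z_d N_{t-}$ leave $N$ unchanged while still resampling types (take $\Pi=\delta_{(1/2,1/2)}$, $\bm\gamma=\bm 0$ and $N_0=1$, so that $N\equiv 1$), so ``realising $N$ jointly with its driving subordinator'' does not identify the event times from the data you are given. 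Your existence step and the consistency-in-$n$ projective argument are fine, and \Cref{lem:finiteness_jumps_lookdown} is indeed the right integrability input; but to close the uniqueness you should either run the paper's localization (bounded rates after stopping at $\tau_\epsilon$, then paste via \cite[Theorem 4.6.2]{EK86}) or genuinely derive the quenched Markov structure of an arbitrary solution rather than assume it.
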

\begin{proof}
The main idea is reproduced for the convenience of the reader.
First, consider the localizing sequence 
\[
\tau_\epsilon := \inf \{ t\geq 0\;:\; N_t \not\in U_\epsilon \text{ or }N_{t-}\not\in U_\epsilon\}\quad\text{ with }\quad U_\epsilon := (\epsilon, 1/\epsilon)
\] 
from \Cref{theo:well-posedness_total_population}.
Then, the stopped martingale problem $(A, \tau_\epsilon)$ has bounded jump rates and we may conclude uniqueness.
Finally, the result is extended to the martingale problem for $A$ by virtue of \cite[Theorem 4.6.2]{EK86} and the fact that
\[
\lim_{\epsilon \downarrow 0}\mathbb{P}\Big( \tau_\epsilon \leq t \Big) = 0
\]
for all $t\geq 0$.
\end{proof}

\begin{lem}\label{lem:uniqueness}
The $(\bm\gamma,\Pi)$-Fleming-Viot martingale problem is well-posed for any $(\bm\gamma,\Pi)\in\mathfrak{C}$.
\end{lem}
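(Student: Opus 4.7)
The plan is to invoke the Markov Mapping Theorem (Theorem \ref{theo:MMT}) to transfer uniqueness from the lookdown martingale problem, which was already established in Lemma \ref{lem:uniqueness-lookdown}, to the $(\bm\gamma,\Pi)$-Fleming-Viot martingale problem. Existence is already provided by Lemma \ref{lem:existence}, and Lemma \ref{lem:regularity} guarantees a càdlàg modification without fixed points of discontinuity, which is the type of solution required by the theorem. The generator identity $\alpha A h = \mathcal{L}^{\bm\gamma,\Pi} \alpha h$ for $h$ of the product form \eqref{eq:test_function_lookdown} is precisely what was verified in the long computation of \Cref{ssec:well_posedness}, so the structural hypothesis of Theorem \ref{theo:MMT} is already in place.

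The main remaining work is to choose an appropriate weight $\psi$ and to verify the accompanying bounds. I would take $\psi(N,\mathsf{k}) := 1 + N + 1/N$, which is continuous and $\geq 1$ on $E = (0,+\infty)\times\mathbb{K}^{\mathbb{N}}$. For $h$ given by \eqref{eq:test_function_lookdown} with $n$ coordinates, the drift part $(\gamma_b - \gamma_d N)\frac{f'(N)}{f(N)} h(N,\mathsf{k})$ is bounded by $c_h(1+N)$. For the jump term, only subsets $J\subseteq \lint 1,n\rint$ with $\vert J\vert\geq 2$ produce a nontrivial $\theta_J(\mathsf{k})$, and the associated binomial weight satisfies
\begin{equation*}
\sum_{\vert J\vert\geq 2} \overline{\bm z}^{\vert J\vert}(1-\overline{\bm z})^{n-\vert J\vert}
\;\leq\; \binom{n}{2}\,\overline{\bm z}^2 \;\leq\; \binom{n}{2}\,\overline{\bm z}.
\end{equation*}
Splitting $\mathcal{Z}$ into $\{z_d\geq 1/2\}$ and $\{z_d<1/2\}$, the first piece has finite $\Pi$-mass by Markov's inequality and the subordinator condition, while on the second piece $\overline{\bm z}\leq 2 z_b/N$, so $\int_{\mathcal{Z}} \overline{\bm z}\, d\Pi \leq C(1+1/N)$. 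Combined with the (uniformly bounded) change of the $f(N)$ factor across a jump, this yields $\vert A h(N,\mathsf{k})\vert \leq c_h\psi(N,\mathsf{k})$, as required by \eqref{eq:mmtpsi}.

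The remaining hypotheses of Theorem \ref{theo:MMT} are then routine: the class $\mathcal{D}$ of test functions of the form \eqref{eq:test_function_lookdown} is closed under multiplication and separates points of $E$; $A^0 := A/\psi$ is a countably determined pre-generator via the Poisson-driven construction in \Cref{ssec:contruction_random_environment} restricted to a countable determining subfamily of $\mathcal{D}$; the kernel satisfies $\alpha(N,\rho;\Gamma^{-1}(N,\rho))=1$ by the strong law of large numbers applied to $\rho^{\otimes\mathbb{N}}$; and $\alpha\psi(N,\rho)=\psi(N)<\infty$ for $N>0$. Finally, for any càdlàg solution $\tilde Y=(N,\rho)$ of the $(\bm\gamma,\Pi)$-Fleming-Viot martingale problem, the first coordinate $N$ solves the population size martingale problem, so by Theorem \ref{theo:well-posedness_total_population} and Corollary \ref{cor:compact_containment_N} the path $(N_s)_{s\in[0,t]}$ a.s.\ lives in a compact subset of $(0,+\infty)$; consequently $\int_0^t \alpha\psi(\tilde Y_s)\, ds = \int_0^t (1+N_s+1/N_s)\, ds < \infty$ almost surely.

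The main obstacle is the bound $\vert Ah\vert \leq c_h\psi$: near $N\to 0$ or $z_d\to 1$ the effective impact $\overline{\bm z}$ approaches $1$, so a naive estimate would diverge; the split according to $z_d\geq 1/2$ versus $z_d<1/2$ together with the $1/N$ term in $\psi$ is what keeps the estimate finite and compatible with the integrability condition on the projected path. Once the bound and the integrability are in hand, item (3) of Theorem \ref{theo:MMT} combined with Lemma \ref{lem:uniqueness-lookdown} yields uniqueness of the $(\bm\gamma,\Pi)$-Fleming-Viot martingale problem, completing the proof of Theorem \ref{theo:well-posedness}.
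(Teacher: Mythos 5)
Your proposal is correct and follows essentially the same route as the paper: the Markov Mapping Theorem with the weight $\psi(N,\mathsf{k})=1+N+1/N$, the bound $\vert Ah\vert\leq c_h\psi$ obtained by splitting $\mathcal{Z}$ at $z_d=1/2$ so that $\overline{\bm z}\leq 2z_b/N$ on one piece and the other piece has finite $\Pi$-mass, and the a.s.\ finiteness of $\int_0^t\alpha\psi(\tilde Y_s)\d{s}$ deduced from the fact that $N$ stays in $(\epsilon,1/\epsilon)$ up to the localizing times $\tau_\epsilon$, followed by transferring uniqueness from \Cref{lem:uniqueness-lookdown} via item (3) of \Cref{theo:MMT}. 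The only step worth spelling out more than you do is that the change of the $f(N)$-factor across a jump is controlled by the Lipschitz estimate $\vert f((1-z_d)N+z_b)-f(N)\vert\leq\Vert f'\Vert_\infty(z_dN+z_b)$ together with the subordinator condition, since boundedness of $f$ alone does not give integrability against the possibly infinite measure $\Pi$.
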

\begin{proof}%[Proof of Lemma \Cref{lem:uniqueness}]
In order to apply \Cref{theo:MMT}, it suffices to find a function $\psi$ satisfying \eqref{eq:mmtpsi} and $\int_0^t \alpha \psi (\tilde{Y}_t) < + \infty$.	We have the bound
	\begin{align}
		\vert Ah(N, \kappa) \vert &\leq (\gamma_b + N\gamma_d) \Vert f'\Vert_\infty\nonumber\\
		&\qquad + \prod_{i=1}^n g_i(\kappa_i) \int_\mathcal{Z} f\Big( (1- {z}_d)N + {z}_b\Big) - f(N)\d{\Pi(\bm{z})}\nonumber\\
		&\qquad + 2\Vert h\Vert_\infty\sum_{\substack{J\subseteq \lint 1, n\rint\\ \vert J\vert \geq 2}} \int_\mathcal{Z} \overline{\bm{z}}^2 \d{\Pi(\bm{z})}\nonumber\\
		&\leq C\cdot N + C2^n\cdot\left(1 + \dfrac{1}{N}\right)\label{eq:lookdown_psi_bound}
	\end{align}
	for a constant depending on the choice of $h$ and the characteristic $(\bm\gamma, \Pi)$. 
	Here, we used that $\overline{\bm{z}} = \frac{{z}_b}{(1-{z}_d)N + {z}_b}$ and thus that $\overline{\bm{z}}^2 \leq \overline{\bm{z}}$ is bounded by $2{z}_b N^{-1}$ if ${z}_d \geq 1/2$ and by $1$ otherwise.
Let $(N,\rho)$ be a $(\bm\gamma,\Pi)$-Fleming-Viot process and take the localizing sequence $(\tau_\epsilon)_{\epsilon\downarrow 0}$ from the previous proof.
One has
\[
\mathbb{E}\left[\int_0^{t\wedge\tau_\epsilon} N_s + \dfrac{1}{N_s}\d{s}\right] < 2t\epsilon^{-1} <+\infty
\]
for all $t\geq 0$.
The same argument as in the proof of \Cref{lem:finiteness_jumps_lookdown} yields that
\[
C\int_0^t 1 + N_s + \dfrac{1}{N_s} \d s 
\]
is a.s.~finite for every $t\geq 0$.
Using the bound \eqref{eq:lookdown_psi_bound}, point 1.~of \Cref{theo:MMT} ensures the existence of a solution to the lookdown martingale problem $(A, \alpha(N_0, \rho_0;\cdot))$.
By \Cref{lem:uniqueness-lookdown} and point 3.~of \Cref{theo:MMT}, we conclude that the $(\bm\gamma,\Pi)$-Fleming-Viot martingale problem has at most one solution for any initial condition.
\end{proof}

In addition to the well-posedness of the $(\bm\gamma,\Pi)$-Fleming-Viot martingale problem, the application of the Martingale Mapping Theorem yields information on the lookdown process.

\begin{corol}\label{corol:exchangeability}
Let $(N,\mathsf{k})$ be the lookdown process from \Cref{ssec:contruction_random_environment} corresponding to the $(\bm\gamma,\Pi)$-Fleming-Viot process $(N,\rho)$. Then, $\mathsf{k}_t$ is an exchangeable sequence with DeFinetti measure $\rho_t$ for every $t\geq 0$.
\end{corol}
\begin{proof}
This is point 2.~of \Cref{theo:MMT}.
\end{proof}

\section{Genealogy and duality} \label{sec:duality}

One can only refer to a coalescent of an infinite-population limit as the genealogy, if the coalescent arises as the limit of the genealogies of the corresponding individual-based models. We will conclude this property for our model from the lookdown framework.
Then, we will discuss the implications of the varying, non-reversible population size of the $(\bm{\gamma}, \Pi)$-Fleming-Viot process for the notion of duality between the coalescent and the proportion process.
Finally, we will investigate fixation of types and whether the associated coalescent comes down from infinity.

\subsection{Individual-based model} \label{subsec:genealogy}

Fix a characteristic $(\bm\gamma,\Pi)$, an initial condition $N_0 > 0$ and $m\in\mathbb{N}$ which should be thought of as \emph{carrying capacity} of the population.

Heuristically, we will consider the following model: we start with an initial population $N_0^m := \lfloor mN_0\rfloor$ and impose two dynamics on the individuals:
\begin{description}
	\item[Continuous part:] Each individual carries an independent Poisson clock with rate $\gamma_d$.
	When the clock rings, the individual dies.
	Independently, there is a Poisson clock with rate $m\gamma_b$.
	Whenever it rings, an individual is chosen at random to produce an offspring.
	
	\item[Discrete events:] Reproduction events fall as a Poisson point process on $[0,+\infty)\times \mathcal{Z}$ with mean intensity $\mathrm{d}t\otimes \Pi(\mathrm{d}\bm z)$.
	At an event $(t,z)$, $\lfloor N_t^m z_d\rfloor$ individuals die and $\lfloor mz_b\rfloor$ individuals are born and carry the type of an individual chosen uniformly at random from the population immediately before the event.
\end{description}
Events with $z_d < 1/N_t^m$ and $z_b < 1/m$ do not appear at all, so only a finite number of events occur on any compact time interval.
In the following, we will assume the process to be stopped at the first time the population size $N_t^m = 0$ reaches zero.

To fit the above into the setting of \cite{DK99}, we decompose the population process $N^m_t = N^m_0 + B^m_t - D^m_t$ into births and deaths. The dynamics are described by the generator
\begin{align*}
\mathcal{L}^mf(n, b, d) &= m\gamma_b\Big( f(n+1, b+1, d) - f(b,d)\Big) + n\gamma_d\Big( f(n-1, b, d+1) - f(b, d)\Big) \\
&\qquad+ \int_\mathcal{Z} \Big( f( n - \lfloor nz_d\rfloor + \lfloor m z_b\rfloor, b + \lfloor mz_b\rfloor, d + \lfloor nz_d\rfloor) - f(n,b,d)\Big) \d\Pi(\bm z).
\end{align*}
The first part is a continuous birth and death generator, whereas the second part implements reproduction events.

Similarly to the infinite population limit, we can construct this population model as a lookdown process, see \cite[Section 1.2]{DK99}.
More precisely, we start with an exchangeable sequence $\mathsf{k}^m_0 \in \mathbb{K}^m$.
If the population changes at time $t$, we denote by $k := \Delta B^m_t$ the number of births.
Births work exactly as in the lookdown construction from \Cref{ssec:contruction_random_environment}: first, choose $k+1$ levels $i_0 < \dots < i_{k}$ randomly from $1,\dots, N^m_t$; then, insert individuals of the type $\mathsf{k}_{t-}^m(i_0)$ at levels $i_1,\dots,i_k$ and move all other individuals upwards.
All individuals with level greater than $N^m_t$ are considered dead, which automatically takes care of death events.

We wish to prove that this lookdown construction converges in a suitable sense to the one defined in \Cref{ssec:contruction_random_environment}.
One verifies immediately that the the sequence $(N^m/m)_{m\in\mathbb{N}}$ converges in law to the population process $N$ started from $N_0$.
However, to apply \cite[Theorem 3.2]{DK99}, we need to prove a stronger convergence.
More precisely, writing $[ B^m]$ for the quadratic variation process of $B^m$, we define
\[
U^m_t := \dfrac{[ B^m]_t + B^m_t}{m^2}.
\]
One then checks that $(N^m/m, U^m)$ converges weakly to $(N, U)$, where
\[
U_t = \int_{[0,t]\times \mathcal{Z}} z_b^2 \;\xi(\mathrm{d}t, \mathrm{d}\bm z)
\]
if $\xi$ denotes the driving noise of $N$ (see \Cref{theo:well-posedness_total_population}).
Using the fact that $N$ does not reach $0$ in finite time together with the remark just after Equation (3.12) in \cite{DK99}, we deduce the following result on genealogies.

\begin{theo}[see {\cite[Theorem 3.2]{DK99}}] \label{theo:genealogy}
Let $\mathsf{k}_0\in \mathbb{K}^\mathbb{N}$ be an exchangeable sequence with DeFinetti measure $\rho_0$ and set $\mathsf{k}^m_0 := (\mathsf{k}_0(i))_{1\leq i\leq m}$.
Consider the lookdown process $(\mathsf{k}^m_t)_{t\geq 0}$ defined above and view it as a process on $\mathbb{K}^\mathbb{N}$ by setting $\mathsf{k}^m_t(i) := \mathsf{k}^m(i)$ if $\max_{s\leq t} N^m_s \leq i$ and $\mathsf{k}^m_t(i) := \mathsf{k}^m_{\tau^m_i(t)-}(i)$, where $\tau^m_i(t) := \sup\{ s< t\;:\; N^m_s \geq i\}$ otherwise.

Write $(N_t, \mathsf{k}_t)_{t\geq 0}$ for the lookdown process from \Cref{ssec:contruction_random_environment} and let $\rho_t$ be the DeFinetti measure of $\mathsf{k}_t$.
Then, setting
\[
\rho^m_t := \dfrac{1}{N^m_t}\sum_{i=1}^{N^m_t} \delta_{\mathsf{k}^m_t(i)}
\]
up to the extinction time, the sequence $\left(\frac{N^m}{m}, \frac{N^m}{m}\rho^m, \mathsf{k}^m\right)$ converges in law to $(N, N\rho, \mathsf{k})$.
In particular, the genealogy of $\mathsf{k}$ approximates the genealogy of the individual-based model.
\end{theo}

\subsection{Duality} \label{subsec:duality}

The notion of duality that we will apply becomes slightly more subtle due to the changes in population size.
For this reason, we will give a short reminder of different duality concepts.

\begin{defin}[See \emph{e.g.}~{\cite[Definition 1.1]{JK14}}]
Consider two Markov processes $X = \Big(\Omega_1, \mathcal{F}_1, (X_t)_{t\in[0,T]}, (\mathbb{P}_x)_{x\in E}\Big)$ and $Y = \Big(\Omega_2, \mathcal{F}_2, (Y_t)_{t\in[0,T]}, (\mathbb{P}^y)_{y\in F}\Big)$ on state spaces $E$ and $F$.
We say that $Y$ is the \emph{dual} to $X$ w.r.t.~a bounded measurable $H: E\times F\rightarrow\mathbb{R}$ if
\begin{equation}\label{eq:duality}
\mathbb{E}_x[H(X_t, y)] = \mathbb{E}^y[H(x, Y_t)]
\end{equation}
for all $x\in E$, $y\in F$ and $t\in [0,T]$.
In this case, $H$ is referred to as \emph{duality function}.
\end{defin}

Often, duality can be checked via a simple generator argument. If $L_X$ and $L_Y$ denote the time-homogeneous generator of $X$ and $Y$ and if $H$ is regular enough, duality w.r.t.~$H$ follows from
\[
\big(L_X H(\cdot, y)\big)(x) = \big( L_Y H(x,\cdot)\big)(y)
\]
for all $x\in E$ and $y\in F$, see {e.g.}~\cite[Proposition 1.2]{JK14} for details.
If a lookdown construction is available, duality can often be strengthened to \emph{pathwise duality}, where the two processes are constructed on the same probability space and \Cref{eq:duality} holds almost surely.

\begin{defin}[See \emph{e.g.}~{\cite[Definition 4.2]{JK14}}] \label{def:pathwiseduality}
Let $X, Y$ and $H$ be as before. Suppose that there exists families $\{ (X_t^x)_{t\in [0,T]}\}_{x\in E}$ and $\{ (Y_t^y)_{t\in[0,T]} \}_{y\in Y}$ on a common probability space $(\Omega,\mathcal{F},\mathbb{P})$ such that:
\begin{enumerate}[label=\roman*)]
	\item for all $x\in E$ and $y\in Y$, the finite-dimensional distributions of $X^x$ and $Y^y$ under $\mathbb{P}$ agree with those of $X$ and $Y$ under $\mathbb{P}_x$ and $\mathbb{P}^y$ resp.;
	\item for all $t\in [0,T]$, $x\in E$ and $y\in F$, we have
	\[
	H(x, Y_T^y) = H(X_t^x, Y_{T-t}^y) = H(H_T^x, y)\qquad\text{ $\mathbb{P}$-a.s.}.
	\]
\end{enumerate}
Then, $X$ and $Y$ are said to be \emph{pathwise dual} w.r.t.~$H$.
\end{defin}

Let us consider the graphical construction from \Cref{ssec:contruction_random_environment}. 
We will focus on the case $\mathbb{K} = \{a, A\}$ of two types and discuss the extension to more general type spaces at the end of the section.
For any $x\in [0,1]$, we can start the process from the initial condition $\rho_0 = x\delta_a + (1-x)\delta_A$ and track the proportion of one type $X_t^x := \rho_t(\{a\})$.
Then, we can fix a time horizon $T > 0$ and we can trace the $y \in \mathbb{N}$ lowest levels of the lookdown construction backwards in time by following the arrows from tip to tail.
Let $Y_t$ denote the number of these ancestral lineages at time $t$ backwards in time (or $(T-t)-$ forward in time).
Then, as all individuals are of the same type if and only if their ancestors in the lookdown were of the same type, we have
\begin{equation}\label{eq:kinda_pathwise}
x^{Y_T^\ell} = \left(X_t^x\right)^{Y_{T-t}^y} = \left(X_T^x\right)^\ell
\end{equation}
almost surely. 
However, this is \emph{not} a pathwise duality in the sense of \Cref{def:pathwiseduality} as the processes $X$ and $Y$ are not Markovian. 
Furthermore, this cannot be salvaged by forcing Markovianity by adding more information: in order for $Y$ to be Markov, it is necessary to add some information from the graphical construction \emph{backwards-in-time}.
Since the time reversed population process cannot start from an arbitrary initial value as it is constrained by the final value forward-in-time, this will not induce a pathwise duality.

Instead, recall that the lookdown process was constructed via the driving Poisson point process $\xi$ on $[0,+\infty)\times \mathcal{Z}\times [0,1]^\mathbb{N}$ with mean intensity $\mathrm{d}t\otimes \Pi(\bm z)\otimes \mathrm{d}\mathsf{u}$.
Writing $\xi_0$ for the projection of $\xi$ onto $[0,+\infty)\times \mathcal{Z}$, we may view the lookdown process as being constructed in the random environment $\xi_0$ which determines the times and strength of the impacts.
Let $(\ascnode,\mathscr{F},\mathscr{P})$ denote the probability space supporting $\xi$, and write $\mathscr{P}_{\xi_0}$ for the regular conditional probability of $\mathscr{P}$ with respect to $\xi_0$.
If $\mathsf{P}$ denotes the law of the random environment $\xi_0$, then 
\[
\mathscr{E}[f] = \mathsf{E}\Big[ \mathscr{E}_{\xi_0}[f] \Big].
\]
The laws $\mathscr{P}$ and $\mathscr{P}_{\xi_0}$ are usually referred to as the \emph{annealed} and the \emph{quenched} law respectively.

Under $\mathscr{P}_{\xi_0}$, $X$ and $Y$ are inhomogeneous Markov processes .
However, since $\xi_0$ also prescribes the times of possible jumps, the processes $X$ and $Y$ cannot be weakly continuous anymore and there will be times at which the sets $\{ X_t \neq X_{t-} \}$ and $\{ Y_t \neq Y_{t-}\}$ have positive probability.
This immediately implies that \Cref{eq:kinda_pathwise} can only hold for every $t\in [0,T]$ a.s.~under the quenched law if and only if we choose the left-continuous variant of $Y$, which is \emph{not} a modification.
Indeed, $X$ and the left-continuous variant of $Y$ are pathwise dual under the quenched law with respect to the usual moment duality functions $H(x,y) = x^y$.

The dependence of the dual process on the driving noise makes it difficult to describe it in a useful way.
Using the framework from \cite{martingale_problem_revisited}, one option is to view it as a solution to a \emph{canonical abstract martingale problem} with control variable $\xi_0$.
More precisely, all processes of the form
\begin{align*}
M^f_t &:= f(Y_t^y) - f(y) \\
&\qquad- \int_{[T-t,T]\times \mathcal{Z}} \sum_{k=2}^{Y_s^y} \left(\dfrac{z_b}{N_s}\right)^k\left(1 - \dfrac{z_b}{N_s}\right)^{Y_s^y - k} \cdot\Big( f(Y_s^y - k + 1) - f(Y_s^y)\Big) \;\xi_0(\mathrm{d}s, \mathrm{d}\bm z)
\end{align*}
for bounded $f:\mathbb{N}\rightarrow\mathbb{R}$ are martingales under the quenched law of $Y^y$ w.r.t.~to the filtration generated by the past of $Y$ and the complete information from $\xi_0$.
Note that $N_s$ is a measurable function of $\xi_0$ and does not provide any additional randomness, so that $M^f_t$ is a measurable function of $(Y^y, \xi_0)$ for every $t\in[0,T]$.
In the following, however, we will concentrate on a forward-in-time approach to derive genealogical properties.

We end this section with a short discussion of the case of general type spaces $\mathbb{K}$. 
In this case, the lineage counting process is not enough, but the argument still applies when considering the partition-valued process $(\pi_t^y)_{t\in[0,T]}$ tracking the whole coalescent of the lowest $y$ levels backwards in time.
More precisely, we start with the trivial partition $\pi_0^y := \{ \{1\}, \dots, \{y\}\}$. Every time the lineage counting process $Y^y$ jumps by $\Delta Y_t^y = l$, one chooses $l$ blocks from $\pi_{t-}^y$ and merges them.
For a function $f:\mathbb{K}^y \rightarrow \mathbb{R}$ and a partition $\pi = \{ A_1,\dots, A_m\}$ of $\{1,\dots,y\}$, we then define $f_\pi: \mathbb{K}^{\pi}\rightarrow\mathbb{R}$ to be the function 
\[
f_\pi(k_{A_1},\dots, k_{A_{m}}) = f(k_{B_1}, \dots, k_{B_y}),
\]
where $B_j \in \pi$ is the unique bloc in $\pi$ containing $j$.
Then, the non-Markovian duality from \Cref{eq:kinda_pathwise} can be extended to
\[
\int_{\mathbb{K}^{\pi_T^y}} f_{\pi_T^y}(\mathsf{k}) \;\rho_0^{\otimes \pi_T^y}(\mathrm{d}\mathsf{k}) = \int_{\mathbb{K}^{\pi_{T-t}^y}} f_{\pi_{T-t}^y}(\mathsf{k}) \;\rho_t^{\otimes \pi_{T-t}^y}(\mathrm{d}\mathsf{k}) = \int_{\mathbb{K}^y} f(\mathsf{k})\;\rho_T^{\otimes y}(\mathrm{d}\mathsf{k})\qquad\text{a.s.}
\]
for all continuous bounded $f:\mathbb{K}^y\to\mathbb{R}$.
As before, pathwise duality also holds in the quenched version, at least if we consider the left-continuous coalescent.

\begin{rem} \label{rem:dualityuniqueness}
	The pathwise duality provides us with an alternative proof of uniqueness that avoids treating uniqueness of the lookdown process. This might be a useful observation in other situations, where the uniqueness of the lookdown construction is difficult, see \cite[Remark 4.9]{EK19}.
	
	Suppose $(N,\rho)$ and $(\hat N, \hat{\rho})$ are two solutions for the $(\bm{\gamma}, \Pi)$-Fleming-Viot martingale problem. The Markov Mapping Theorem \ref{theo:MMT} and the lookdown construction yields two lookdown processes $(N, \mathsf{k})$ and $(\hat{N}, \hat{\mathsf{k}})$ and even their respective driving Poisson point processes $\xi$ and $\hat{\xi}$; see \cite{EquivSDEMart} for the reconstruction of the driving noise.
	By characterizing all moments, the conditional pathwise duality from above uniquely determines the laws of $\rho$ and $\hat{\rho}$ conditioned on $\xi_0$ respectively $\hat{\xi}_0$. 
	As the two Poisson point process have the same law, also the unconditional laws of $\rho$ and $\hat{\rho}$ are equal.
	
	Although we did not need to consider the uniqueness of the lookdown process, the existence of a corresponding lookdown is crucial in the argument.
	Indeed, uniqueness is not immediately implied by \Cref{eq:kinda_pathwise} as duality only provides uniqueness of the one-dimensional distributions.
	To extend this to all finite-dimensional distributions, one generally makes use of the Markov property.
\end{rem}

\subsection{Fixation}
We will first prove that the lowest levels of the lookdown process will eventually all be of the same type and deduce afterwards the equivalence to fixation of a certain type in the population.
\PropositionFixation*
\begin{proof}
	For two constants $0 < C_1 < C_2$ to be determined later and $i \geq 1$, we define $ \sigma_0 := 0$ and the down- and upcrossing stopping times
	\begin{align*}
		\tau_i &:= \inf \{t \geq \sigma_{i-1} \;:\; N_t \leq C_1 \text{ or } N_{t-}\leq C_1 \}, \\
		\sigma_i &:= \inf \{ t \geq \tau_i \;:\; N_t \geq C_2 \text{ or }N_{t-}\geq C_2 \}.
	\end{align*}
	Due to the ergodicity, a stationary distribution $\mu$ exists, satisfying in particular
	\begin{equation*}
		\frac{1}{T}\int_0^T \mathds{1}_{[0, C_1]} (N_t) \d t \underset{T\to+\infty}{\longrightarrow} \mu ([0, C_1]) \enspace a.s.
	\end{equation*}
	Choose $C_1$ large enough such that $\mu([0,C_1]) > 0$. 
	Then, the above implies that $\tau_1$ is a.s.~finite. 
	Assume for the moment that $\sigma_1$ is a.s.~finite. If $C_2\to +\infty$, then so does $\sigma_1$. 
	For any fixed $T_0>0$ and $\epsilon\in(0,1)$, we may choose $C_2 = C_2(T_0,\epsilon)$ large enough so that $\sigma_1 - \tau_1 > T_0$ with probability at least $1-\epsilon$. 
	Enumerate the point process $\xi = \{(t_i, \bm{z}^i, u^i)\}$ from the lookdown construction. 
	Then,
	\begin{align*}
	&\mathbb{P}\big(\text{There is a point }(t_i, \bm{z}^i, u^i)\text{ with ${z}^i_b \geq \delta$ in the time interval }[\tau_1,\sigma_1]\big) \\
	&\geq\mathbb{P}\Big(\text{There is a point }(t_i, \bm{z}^i, u^i)\text{ with ${z}^i_b\geq \delta$ in the time interval }[\tau_1,\tau_1+T_0]\\
	&\hspace*{1cm}\text{ and }\sigma_1 - \tau_1>T_0\Big)\\
	&= 1 - \mathbb{P}(\text{There is no such point in }[\tau_1,\tau_1 + T_0]) - \mathbb{P}(\sigma_1 - \tau_1 \leq T_0)\\
	&\geq 1 - \exp\left(- T_0\Pi_b\big([\delta,+\infty)\big)\right) - \epsilon.
	\end{align*}
	As $\Pi_b\neq 0$, we may choose $\delta> 0$ small enough such that $\Pi_b([\delta,+\infty))$ strictly positive. 
	Then, choose $T_0$ large enough that the above remains bounded away from $0$. 
	Finally, by \Cref{theo:well-posedness}, the strong Markov property extends this argument to the interval $[\tau_2, \sigma_2]$ etc. 
	Hence, there will be a.s.~infinitely many reproduction events with effective impact
	\[
	\overline{\bm{z}}^i = \dfrac{{z}_b^i}{(1- {z}_d^i)N_{t_i-} + {z}_b^i} \geq \dfrac{\delta}{C_2 + \delta} =: C > 0.
	\]
	Consequently, the $n$ lowest lineages are all effected by a common reproduction event with probability at least $C^n > 0$ a.s.~infinitely often and we may conclude that the $n$ lowest lineages are effected by a common reproduction event at an a.s.~finite time.
	
 Now, if $\sigma_i = +\infty$ with positive probability, we may proceed in a more straightforward way. 
 Indeed, this will imply that the interval $[\tau_i,\sigma_i]$ is infinite on this event, and thus contains infinitely many reproduction events satisfying $\overline{\bm{z}}^i \geq C$. 
 Hence, it suffices to choose $\epsilon := 1 - \mathbb{P}(\sigma_i = +\infty)$ and $T_0$ large enough to compensate.
\end{proof}

\begin{corol}\label{corol:fixation}
Suppose that $\mathbb{K} \subseteq [0,1]$ is closed. For a $(\bm\gamma,\Pi)$-Fleming-Viot process, let $F_t(\kappa) := \rho_t([0,\kappa])$ denote the cumulative distribution function of $\rho_t$. 
Under the assumptions of \Cref{pro:fixation}, $F_t(\kappa)$ converges for every $\kappa\in\mathbb{K}$ a.s.~to a random variable $F_\infty(\kappa)$ supported on $\{0,1\}$ such that
\[
\mathbb{P}\left(\left.\lim_{t\to+\infty} F_t(\kappa) = 1\;\right\vert\; F_0\right) = F_0(\kappa).
\]
\end{corol}
\begin{proof}
Without loss of generality, we may consider $\mathbb{K} = \{0,1\}$, as $F_t(\kappa)$ splits the type space into two sets. 
We will write $w_t := \rho_t(\{1\})$ and it suffices to show that $w_t \to w_\infty$ a.s.~for a Bernoulli random variable $w_\infty$ with conditional mean $w_0$, which we may choose to be deterministic. For a test function $f = f(w)$ independent of $N$, we obtain
\[
\mathcal{L}^{\bm\gamma, \Pi} f(N, w) = \int_\mathcal{Z} \left[ w f\Big( (1-\overline{\bm z})w + \overline{\bm z}\Big) + (1-w)f\Big((1-\overline{\bm z})w\Big) - f(w)\right]\d\Pi(\bm z).
\]
In particular, $w_t$ is a bounded martingale and we may conclude that it converges almost surely to its limit $w_\infty$. Additionally,
\[
w_t^n = \left(\int_\mathbb{K} \kappa \d\rho_t(\kappa)\right)^n = \mathbb{E}\left[\left. \prod_{i=1}^n \kappa_t(i)\;\right\vert\;\rho_t\right]\quad\text{a.s.}
\]
Hence, if we write $\tau_n$ for the almost surely finite time at which the $n$ lowest lineages are all descendants from the lowest lineage, then
\begin{align*}
w_{\tau_n}^n = \mathbb{E}\left[ \left.\prod_{i=1}^n \kappa_{\tau_n}(i)\;\right\vert\;\rho_t\right] = \mathbb{E}\left[\prod_{i=1}^n \kappa_0(0)\right] = \mathbb{E}[ \kappa_0(0) ] = w_0.
\end{align*}
This implies for all $n\in\mathbb{N}$
\begin{align*}
\mathbb{E}[w_\infty^n] &= \lim_{t\to+\infty} \mathbb{E}[ w_t^n] \\
&= \lim_{t\to+\infty} \Big( \mathbb{E}\left[w_{t}^n\mathds{1}_{\tau_n \leq t}\right] + \mathcal{O}(\mathbb{P}(\tau_n > t)\Big)\\
&= w_0\lim_{t\to+\infty} \mathbb{P}(\tau_n \leq t) = w_0,
\end{align*}
which concludes the proof.
\end{proof}

\subsection{Not coming down from infinity}
Coming down from infinity addresses the question whether a coalescent started from infinitely many lineages coalesces quickly enough to later have only finitely many lineages. The next result shows that our coalescent contains a non-trivial fraction of singleton lineages at any time, a situation know as containing dust.
This complements the previous result as it shows that fixation cannot occur in bounded time. A related discussion can be found on \cite[p.196]{DK99}.

\PropositionNotComingDown*
\begin{proof}
Let $t\geq 0$ be arbitrary.
Since the rate $\Pi_d(\{1\})$ at which the entire population is replaced is finite, there is a positive probability that no such event takes place before time $t$.
As such, we may and will assume $\Pi_b(\{1\}) = 0$.

Conditionally on the events $\xi_0 = \{ (t_i,\bm z_i) \}$, the first lineage does not jump until time $t$ with probability at least $\prod_{t_i\leq t} \left(1 - \overline{\bm{z}}_i\right)$, where $\overline{\bm{z}}_i := \frac{{z}_{b}^i}{(1-{z}_{d}^i)N_{t_i} + {z}_{b}^i}$. 
Now,
\begin{equation*}
\prod_{t_i \leq t} (1 - \overline{\bm{z}}_i ) > 0 \quad\text{ if and only if }\quad \sum_{t_i \leq t} \overline{\bm{{z}}}_i < + \infty.
\end{equation*}
Denote by $\tau_\epsilon := \inf\{t \geq 0\;:\; N_t \leq \epsilon\}$. 
Then, for any $K,\epsilon > 0$, we have
\begingroup
\allowdisplaybreaks
\begin{align*}
	\mathbb{P} \Bigg( \sum_{t_i \leq t} \overline{\bm{z}}_i < K \Bigg) &\geq \mathbb{P} \Bigg( \Big\{ \sum_{t_i \leq t} \overline{\bm{z}}_i < K \Big\} \cap \{ \tau_\epsilon > t \} \Bigg) \\
	&\geq \mathbb{P} \Bigg( \Big\{ \sum_{t_i \leq t} \frac{{z}_b^i}{(1 - {z}_d^i) \epsilon + {z}_b^i} < K \Big\} \cap \{ \tau_\epsilon > t \} \Bigg) \\
	&= 1-  \mathbb{P} \Bigg( \Big\{ \sum_{t_i \leq t} \frac{{z}_b^i}{(1 - {z}_d^i) \epsilon + {z}_b^i} \geq K \Big\} \cup \{ \tau_\epsilon \leq t \} \Bigg) \\
	&\geq 1-  \mathbb{P} \Bigg( \Big\{ \sum_{t_i \leq t} \frac{{z}_b^i}{(1 - {z}_d^i) \epsilon + {z}_b^i} \geq K \Big\} \Bigg) - \mathbb{P} ( \tau_\epsilon \leq t ) \\
	&\geq 1-  K^{-1} \mathbb{E} \Bigg[  \sum_{t_i \leq t} \frac{{z}_b^i}{(1 - {z}_d^i) \epsilon + {z}_b^i} \Bigg] - \mathbb{P} ( \tau_\epsilon \leq t )\\
	&= 1 - K^{-1} t\int_\mathcal{Z} \dfrac{{z}_b}{(1-{z}_d)\epsilon + {z}_b}\d\Pi(\bm{z}) - \mathbb{P}(\tau_\epsilon \leq t).
\end{align*}
\endgroup
The integral in the last line is finite as
\[
\int_\mathcal{Z} \dfrac{{z}_b}{(1-{z}_d)\epsilon + {z}_b}\d\Pi(\bm{z}) \leq \dfrac{2}{\epsilon}\int_{\{{z}_d\leq 1/2\}} {z}_d\d\Pi(\bm{z}) + \int_{\{{z}_d\geq 1/2\}} \d\Pi(\bm{z}) < +\infty.
\]
If we choose $K$ large enough and $\epsilon$ small enough, the above is bounded away from zero and the statement follows.
\end{proof}

%\section{Simulations} \label{sec:simulations}

\section*{Acknowledgements}
The authors wish to thank Alison Etheridge and David Helekal for valuable suggestions and discussions. 
The hospitality of the Hausdorff Institute for Mathematics Bonn in the course of the 2022 Junior Trimester Program "Stochastic modelling in life sciences" is gratefully remembered by the authors. 
This program was funded by the Deutsche Forschungsgemeinschaft (DFG, German Research Foundation) under Germany's Excellence Strategy - EXC-2047/1 - 390685813.
J.K. has been partially funded by Deutsche Forschungsgemeinschaft (DFG) through grant CRC 1114 ``Scaling Cascades in Complex Systems", Project Number 235221301, Project C02 ``Interface dynamics: Bridging stochastic and hydrodynamic descriptions".
B.W. was supported by the UK Engineering and Physical Sciences Research Council Grant [EP/V520202/1].

\nomenclature{$\alpha$}{Projection to extended space in lookdown}
\nomenclature{$\Xi$}{De Finetti measure of the lookdown}
\nomenclature{$D$}{Fr\'echet derivative}
\nomenclature{$\mathbb{D}$}{C\'adl\'ag functions}
\nomenclature{$\overleftarrow{N}$}{Time reversal of the population size}
\nomenclature{$C_b^k$}{$k$-times countinuously bounded differentiable functions}
\nomenclature{$\mathbb{K}$}{Compact type tpace, e.g.~$[0,1]$}
\nomenclature{$N$}{Total population size in $(0,+\infty)$}
\nomenclature{$\rho$}{Distribution of types on $\mathbb{K}$}
\nomenclature{$\sigma$}{Unnormalized type measure on $\mathbb{K}$}
\nomenclature{$\xi$}{Poisson process driving a stochastic process}
\nomenclature{$\lambda$}{Maximum level in the continuous lookdown}
\nomenclature{$\mu$}{Distribution of the population size $N$}
\nomenclature{$\Pi$}{Jump measure on the set of events $\mathcal{Z}$}
\nomenclature{$\delta_x$}{Dirac-Delta at $x$ is the unit atom at $x$}
\nomenclature{$\mathcal{Z}$}{The event space $\big( [0,1]\times [0,+\infty)\big)\setminus\{(0,0)\}$}
\nomenclature{$\bm{z}$}{An event $\bm{z} = ({z}_d,{z}_b)\in \mathcal{Z}$.}
\nomenclature{$\overline{\bm{{z}}}$}{Effective impact ${z}_b / ((1 - {z}_d) N + {z}_b)$}
\nomenclature{$\bm\gamma$}{Continuous part $\bm\gamma = (\gamma_d,\gamma_b)$ of the process.}
\nomenclature{$S$}{State space $(0,+\infty)\times \mathcal{M}_1 (\mathbb{K})$ of the $(\bm{\gamma},\Pi)$-Fleming-Viot process}
\nomenclature{$\mathcal{N}$}{Set of couting measures on $[0, +\infty)$}
\nomenclature{$\mathcal{M}_F$}{Space of finite measures}
\nomenclature{$\mathcal{M}_1$}{Space of probability measures}
\nomenclature{$\lint 1, n \rint$}{The set $\{ 1,..., n\}$ }
\nomenclature{$\mathbb{N}_0$}{The set of natural numbers $\{0,1,2,\dots\}$}
\nomenclature{$\mathbb{N}$}{The set of non-zero natural numbers $\mathbb{N}\setminus \{0\}$}
\nomenclature{$\mathrm{Exp}(m)$}{Exponential distribution of mean $m$}
\nomenclature{$\mathrm{Geo}(m)$}{Geometric distribution of mean $m$}
\nomenclature{$\mathrm{Pois}(m)$}{Poisson distribution of mean $m$}

%\printnomenclature

%
%\nocite{*}
\printbibliography

\end{document}